\setlist{labelsep=.25in,leftmargin=*,labelindent=1cm,topsep=2pt,noitemsep}	
\setlist[enumerate]{label=(\roman*)}
\theoremstyle{plain}
\newtheorem{theorem}{Theorem}
\newtheorem{proposition}[theorem]{Proposition}
\newtheorem{lemma}[theorem]{Lemma}
\theoremstyle{remark}
\newtheorem*{remark}{Remark}
\theoremstyle{definition}
\newtheorem{example}[theorem]{Example}
\newcommand{\Polya}{P\'{o}lya }
\title{On some properties of probability kernels}
\author{Hristo Sariev\thanks{h.sariev@math.bas.bg}}
\affil{\normalsize Institute of Mathematics and Informatics, Bulgarian Academy of Sciences, Sofia, Bulgaria\\ \vspace{0.1cm}\normalsize Faculty of Mathematics and Informatics, Sofia University ``St. Kliment Ohridski'', Sofia, Bulgaria}
\date{}
\begin{document}

\maketitle

\begin{abstract}
Although regular conditional distributions (r.c.d.) are well-defined and widely used measure-theoretic objects, they can violate our intuition from the classical definition of a conditional probability given an event. For that purpose, the notion of a proper r.c.d. has been introduced. Here, we study how properness, viewed as a property of probability kernels in general, is related to stationarity, compatibility, reversibility and totality, revealing the effects these properties have on the structure of probability kernels. As a further development, we consider the inverse problem of characterizing certain classes of r.c.d.s in terms of the above properties. In particular, we derive necessary and sufficient conditions under which, for a given probability kernel, there exists a unique (in some sense) sub-$\sigma$-algebra such that the probability kernel is a proper r.c.d. given that sub-$\sigma$-algebra.
\end{abstract}
\noindent{\bf Keywords:}
Regular conditional distribution, proper conditional distribution, probability kernel, countably generated $\sigma$-algebra

\noindent{\bf MSC2020 Classification:} 60A10, 60A05, 28A99

\section{Introduction}\label{section:introduction}

Let $(\mathbb{X},\mathcal{X})$ be a standard Borel space, $\nu$ a probability measure on $\mathbb{X}$, and $\mathcal{G}$ a sub-$\sigma$-algebra of $\mathcal{X}$ on $\mathbb{X}$. Denote by $\nu_{\mathcal{G}}$ the restriction of $\nu$ on $\mathcal{G}$. We will say that $\mathcal{G}$ is \textit{countably generated (c.g.) under $\nu$} if there exists $C\in\mathcal{G}$ such that $\nu(C)=1$ and $\mathcal{G}\cap C$ is c.g. For each $x\in\mathbb{X}$, define the \textit{$\mathcal{G}$-atom} of $x$ to be
\[[x]_\mathcal{G}:=\bigcap_{x\in G\in\mathcal{G}}G.\]
In general, atoms need not be measurable sets, but for c.g. $\sigma$-algebras this is true: if $\mathcal{F}=\sigma(F_1,F_2,\ldots)$, for some $F_1,F_2,\ldots\subseteq\mathbb{X}$, then
\[[x]_\mathcal{F}=\{y\in\mathbb{X}:\delta_x(F)=\delta_y(F)\mbox{ for all }F\in\mathcal{F}\}=\{y\in\mathbb{X}:\delta_x(F_n)=\delta_y(F_n)\mbox{ for all }n\geq1\}\in\mathcal{F}.\]
If $\mathcal{F}$ is just c.g. under $\nu$, then $[x]_\mathcal{F}\in\mathcal{F}$ for $\nu$-a.e. $x$.

A probability kernel on $\mathbb{X}$ is a function $R:\mathbb{X}\times\mathcal{X}\rightarrow[0,1]$ that satisfies $(i)$ the map $x\mapsto R(x,B)\equiv R_x(B)$ from $\mathbb{X}$ to $[0,1]$ is $\mathcal{X}$-measurable, for all $B\in\mathcal{X}$; and $(ii)$ $B\mapsto R_x(B)$ is a probability measure on $\mathbb{X}$, for all $x\in\mathbb{X}$. Moreover, a probability kernel $R$ on $\mathbb{X}$ is said to be a \textit{regular version of the conditional distribution} (r.c.d.) for $\nu$ given $\mathcal{G}$, denoted by
\[R(\cdot)=\nu(\,\cdot\mid\mathcal{G}),\]
if, in addition to $(i)$-$(ii)$, it holds $(iii)$ $x\mapsto R_x(B)$ is $\mathcal{G}$-measurable, for all $B\in\mathcal{X}$; and $(iv)$ $\int_AR_x(B)\nu(dx)=\nu(A\cap B)$, for all $A\in\mathcal{G}$ and $B\in\mathcal{X}$. The assumptions on $(\mathbb{X},\mathcal{X})$ guarantee that an r.c.d. for $\nu$ given $\mathcal{G}$ exists and is unique up to a $\nu$-null set.

From both a foundational and a technical point of view, it is desirable to work with probability kernels $R$ on $\mathbb{X}$ that are
\begin{enumerate}
\item[\textsf{(S)}] \textit{stationary} with respect to (w.r.t.) $\nu$, provided as measures on $\mathbb{X}$,
\[\int_\mathbb{X}R_x(dy)\nu(dx)=\nu(dy);\]
\item[\textsf{(SC)}] \textit{self-compatible}, provided as measures on $\mathbb{X}$,
\[\int_\mathbb{X}R_y(dz)R_x(dy)=R_x(dz)\qquad\mbox{for }\nu\mbox{-almost every (a.e.) }x;\]
\item[\textsf{(R)}] \textit{reversible} w.r.t. $\nu$, provided as measures on $\mathbb{X}^2$,
\[R_x(dy)\nu(dx)=R_y(dx)\nu(dy);\]
\item[\textsf{(SR)}] \textit{self-reversible}, provided as measures on $\mathbb{X}^2$,
\[R_y(dz)R_x(dy)=R_z(dy)R_x(dz)\qquad\mbox{for }\nu\mbox{-a.e. }x;\]
\item[\textsf{(P)}] \textit{a.e. proper} w.r.t. $\mathcal{G}$, provided there exists $F\in\mathcal{G}$ such that $\nu(F)=1$ and
\[R_x(A)=\delta_x(A),\qquad\mbox{for all }A\in\mathcal{G}\mbox{ and }x\in F;\]
\item[\textsf{(T)}] \textit{a.e. total} on the atoms of $\mathcal{G}$, provided there exists $G\in\mathcal{G}$ such that $\nu(G)=1$ and, for all $x\in G$, it holds $[x]_\mathcal{G}\in\mathcal{G}$ and
\[R_x([x]_\mathcal{G})=1.\]
\end{enumerate}

Trivially, \textsf{(R)} implies \textsf{(S)}, and \textsf{(SR)} implies \textsf{(SC)}. Furthermore, \textsf{(T)} follows from \textsf{(P)} when $\mathcal{G}$ contains $\nu$-a.e. of its atoms. The converse result is easily established. Letting $R$ satisfy \textsf{(T)} on $G\in\mathcal{G}$ with $\nu(G)=1$, we get, for all $A\in\mathcal{G}$ and $x\in G$, that $R_x(A)\geq R_x([x]_\mathcal{G})=1$ when $x\in A$, so $R_x(A)=1=\delta_x(A)$; otherwise, if $x\in A^c$, we have again $R_x(A)=1-R_x(A^c)=0=\delta_x(A)$.

Now, assume that $R(\cdot)=\nu(\,\cdot\mid\mathcal{G})$ is an r.c.d. for $\nu$ given some sub-$\sigma$-algebra $\mathcal{G}\subseteq\mathcal{X}$. In that case, \textsf{(S)}, \textsf{(SC)}, \textsf{(R)}, \textsf{(SR)} follow from standard results on conditional distributions. Indeed, for every $A,B\in\mathcal{X}$,
\begin{equation}\label{eq:introduction:rcd_reverse1}
\int_A\nu(B|\mathcal{G})(x)\nu(dx)=\mathbb{E}_\nu[\mathbbm{1}_A\cdot\nu(B|\mathcal{G})]=\mathbb{E}_\nu[\nu(A|\mathcal{G})\nu(B|\mathcal{G})]=\int_B\nu(A|\mathcal{G})(x)\nu(dx),
\end{equation}
and, for $\nu$-a.e. $x$,
\[\int_A\nu(B|\mathcal{G})(y)\nu(dy|\mathcal{G})(x)=\mathbb{E}_\nu[\mathbbm{1}_A\cdot\nu(B|\mathcal{G})|\mathcal{G}](x)=\nu(A|\mathcal{G})(x)\nu(B|\mathcal{G})(x)=\int_B\nu(A|\mathcal{G})(y)\nu(dy|\mathcal{G})(x).\]
As for \textsf{(P)}, arguing through \eqref{eq:introduction:rcd_reverse1}, we get
\[R_x(A)=\delta_x(A),\qquad\mbox{for all }A\in\mathcal{G}\mbox{ and }\nu\mbox{-a.e. }x,\]
where the essential set may depend on $A$. Thus, when $\mathcal{G}$ is c.g., combining these essential sets leads to \textsf{(P)}. In fact, as shown by Proposition \ref{result:introduction:proper:characterization}, for r.c.d.s, conditions \textsf{(P)} and \textsf{(T)} ultimately depend on the properties of the conditioning $\sigma$-algebra $\mathcal{G}$, see also \cite[p.\,650]{berti2007} and \cite[Theorem 1]{blackwell1975}.

\begin{proposition}\label{result:introduction:proper:characterization}
The following conditions are equivalent:
\begin{enumerate}
\item $\mathcal{G}$ is c.g. under $\nu$;
\item $\nu(\,\cdot\mid\mathcal{G})$ satisfies \textnormal{\textsf{(P)}};
\item $\nu(\,\cdot\mid\mathcal{G})$ satisfies \textnormal{\textsf{(T)}}.
\end{enumerate}
\end{proposition}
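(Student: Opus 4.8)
The plan is to prove the cycle $(1)\Rightarrow(2)\Rightarrow(3)\Rightarrow(1)$, using the remarks already established in the excerpt to shorten one of the implications. Recall that the excerpt has shown that for an r.c.d.\ we always have $R_x(A)=\delta_x(A)$ for each fixed $A\in\mathcal{G}$ and $\nu$-a.e.\ $x$ (with the null set depending on $A$); the whole point is to upgrade this to a single null set valid simultaneously for all $A\in\mathcal{G}$.

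For $(1)\Rightarrow(2)$: assume $\mathcal{G}$ is c.g.\ under $\nu$, so there is $C\in\mathcal{G}$ with $\nu(C)=1$ and $\mathcal{G}\cap C=\sigma(G_1,G_2,\ldots)$ for some $G_n\in\mathcal{G}$. For each $n$, the displayed property \eqref{eq:introduction:rcd_reverse1} (or directly property (iv) of the r.c.d.) gives a $\nu$-null set $N_n$ off which $R_x(G_n)=\delta_x(G_n)$. Put $F_0:=C\setminus\bigcup_n N_n\in\mathcal{G}$, so $\nu(F_0)=1$, and for $x\in F_0$ the two probability measures $A\mapsto R_x(A)$ and $A\mapsto\delta_x(A)$ agree on the generating sets $G_1,G_2,\dots$ of $\mathcal{G}\cap C$. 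Since $\{G_n\}$ can be taken closed under finite intersections (replace the generators by the countable algebra they generate), a $\pi$-$\lambda$ argument forces $R_x(A)=\delta_x(A)$ for all $A\in\mathcal{G}\cap C$. Finally, for a general $A\in\mathcal{G}$ and $x\in F_0$ one writes $A\cap C\in\mathcal{G}\cap C$, uses $R_x(C)=\delta_x(C)=1$ (as $x\in C$) to get $R_x(A)=R_x(A\cap C)=\delta_x(A\cap C)=\delta_x(A)$. Thus \textsf{(P)} holds on $F:=F_0$.

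For $(2)\Rightarrow(3)$: this is essentially free. The excerpt already notes that \textsf{(T)} follows from \textsf{(P)} provided $\mathcal{G}$ contains $\nu$-a.e.\ of its atoms; and under \textsf{(P)} on $F\in\mathcal{G}$ one can run the standard argument that $[x]_\mathcal{G}=\{y: \delta_y=\delta_x \text{ on the countable generators}\}$ is measurable for $\nu$-a.e.\ $x$. Concretely, $(1)$ is needed here too, but since we are proving a cycle we may instead derive $(3)$ directly from $(2)$: given \textsf{(P)} on $F\in\mathcal{G}$, note that $R_x$ restricted to $\mathcal{G}$ equals $\delta_x$, hence $R_x$ assigns mass one to every $\mathcal{G}$-set containing $x$; when $[x]_\mathcal{G}$ is itself measurable this already yields $R_x([x]_\mathcal{G})=1$. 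To handle measurability of the atoms without assuming $(1)$ again, observe that \textsf{(P)} lets us recover a countable generating family: the map $x\mapsto R_x$ is $\mathcal{G}$-measurable into the (standard Borel) space of probability measures, and on $F$ it coincides with $x\mapsto\delta_x$, so the $\mathcal{G}$-atoms coincide $\nu$-a.e.\ with the (measurable) level sets of this countable family of coordinate maps. This gives $G\in\mathcal{G}$, $\nu(G)=1$, with $[x]_\mathcal{G}\in\mathcal{G}$ and $R_x([x]_\mathcal{G})=1$ for $x\in G$.

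For $(3)\Rightarrow(1)$: assume \textsf{(T)} on $G\in\mathcal{G}$ with $\nu(G)=1$ and $[x]_\mathcal{G}\in\mathcal{G}$ for $x\in G$. The remark in the excerpt shows \textsf{(T)} forces $R_x(A)=\delta_x(A)$ for all $A\in\mathcal{G}$, $x\in G$, i.e.\ \textsf{(P)} holds; in particular $x\mapsto R_x(A)$ is $\mathcal{G}$-measurable and equals $\mathbbm{1}_A(x)$ on $G$. Now use that $(\mathbb{X},\mathcal{X})$ is standard Borel, so $\mathcal{X}$ itself is countably generated, say $\mathcal{X}=\sigma(B_1,B_2,\dots)$; then $\mathcal{G}':=\sigma\big(\{x\mapsto R_x(B_n): n\ge1\}\big)\subseteq\mathcal{G}$ is a c.g.\ sub-$\sigma$-algebra, and I claim $\mathcal{G}=\mathcal{G}'$ up to $\nu$-null sets on $G$. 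Indeed, for $A\in\mathcal{G}$, the function $x\mapsto R_x(A)$ is $\sigma(\{R_\bullet(B_n)\})$-measurable by a monotone-class argument (the sets $A$ for which this holds form a $\sigma$-algebra containing the $B_n$), hence $\mathcal{G}'$-measurable; but on $G$ this function is $\mathbbm{1}_A$, so $A\cap G\in\mathcal{G}'$ (up to the null set $G^c$). Setting $C:=G$ then exhibits $\mathcal{G}\cap C$ as (contained in, hence equal to, since $\mathcal{G}'\subseteq\mathcal{G}$) a c.g.\ $\sigma$-algebra, proving $(1)$.

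The main obstacle is the upgrade in $(1)\Rightarrow(2)$ from "for each fixed $A$, a.e." to "a.e., for all $A$ simultaneously" — this is exactly where countable generation is used and where the $\pi$-$\lambda$ step and the reduction to $\mathcal{G}\cap C$ must be handled with care (one must not forget that $\mathcal{G}$ itself may fail to be countably generated, only its trace on a full-measure set is). The subtle point in $(3)\Rightarrow(1)$ is recognizing that the coordinate functions $x\mapsto R_x(B_n)$, which are automatically $\mathcal{G}$-measurable and generate a c.g.\ $\sigma$-algebra, recover all of $\mathcal{G}$ modulo $\nu$ precisely because properness identifies $R_x|_{\mathcal{G}}$ with $\delta_x$.
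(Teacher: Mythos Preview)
Your argument is correct and uses exactly the same two ingredients as the paper: the $\pi$-$\lambda$ upgrade on a countable generating family for $(1)\Rightarrow(2)$, and the observation that $\mathcal{G}^{\prime}:=\sigma\bigl(x\mapsto R_x(B_n):n\ge1\bigr)$ recovers $\mathcal{G}$ on a full-measure set once properness identifies $R_x|_{\mathcal{G}}$ with $\delta_x|_{\mathcal{G}}$. The only difference is organizational: the paper proves $(i)\Leftrightarrow(ii)$ first and then handles $(ii)\Leftrightarrow(iii)$ by invoking the already-established $(ii)\Rightarrow(i)$ to get measurable atoms, whereas your cycle $(1)\Rightarrow(2)\Rightarrow(3)\Rightarrow(1)$ ends up running the $\sigma(R)$-trick twice---once inside $(2)\Rightarrow(3)$ to obtain measurability of $[x]_\mathcal{G}$, and again in $(3)\Rightarrow(1)$---which you yourself flag as a detour.
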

\begin{proof} \noindent\textit{$(i)\Rightarrow(ii)$.}
Let $C\in\mathcal{G}$ be such that $\nu(C)=1$ and $\mathcal{G}\cap C=\sigma(B_1,B_2,\ldots)$, where $\{B_1,B_2,\ldots\}$ forms a $\pi$-class on $C$, without loss of generality. Then $B_n=B_n^*\cap C$ for some $B_n^*\in\mathcal{G}$. It follows for each $n=1,2,\ldots$ and every $A\in\mathcal{G}$ that
\[\int_A\nu(B_n^*|\mathcal{G})(x)\nu(dx)=\int_A\delta_x(B_n^*)\nu(dx);\]
thus, $\nu(B_n^*|\mathcal{G})(x)=\delta_x(B_n^*)$ for $\nu$-a.e. $x$. Combining the essential sets, we obtain $C_1\in\mathcal{G}$ such that $\nu(C_1)=1$ and, for every $x\in C_1$, it holds $\nu(B_n^*|\mathcal{G})(x)=\delta_x(B_n^*)$ for all $n\geq1$. On the other hand, $\nu(C)=1$ implies that $\nu(C|\mathcal{G})(x)=1$ for $\nu$-a.e. $x$, say on $C_2\in\mathcal{G}$ with $\nu(C_2)=1$. Then $\nu(B_n|\mathcal{G})(x)=\delta_x(B_n)$ and, ultimately, from the determining class theorem, $\nu(B|\mathcal{G})(x)=\delta_x(B)$, for all $B\in\mathcal{G}\cap C$ and $x\in C\cap C_1\cap C_2$. Therefore,
\[\nu(A|\mathcal{G})(x)=\nu(A\cap C|\mathcal{G})(x)=\delta_x(A\cap C)=\delta_x(A),\qquad\mbox{for all }A\in\mathcal{G}\mbox{ and }x\in C\cap C_1\cap C_2.\]

\noindent\textit{$(ii)\Rightarrow(i)$.}
Suppose that $\nu(\,\cdot\mid\mathcal{G})$ is a.e. proper on $F\in\mathcal{G}$, where $\nu(F)=1$. By hypothesis, $\mathcal{X}=\sigma(E_1,E_2,\ldots)$, for some $\pi$-class $\{E_1,E_2,\ldots\}$ on $\mathbb{X}$. Let us define
\[\mathcal{G}^*:=\sigma(x\mapsto\nu(E_n|\mathcal{G})(x),n\geq1)\qquad\mbox{and}\qquad\mathcal{F}:=\{A\in\mathcal{X}:\nu(A|\mathcal{G})\mbox{ is }\mathcal{G}^*\mbox{-measurable}\}.\]
Then $\mathcal{G}^*$ is c.g. and $\mathcal{G}^*\subseteq\mathcal{G}$. Moreover, $\mathcal{F}$ is a $\lambda$-class such that $\mathcal{F}\supseteq\{E_1,E_2,\ldots\}$, so $\mathcal{F}=\mathcal{X}$ by Dynkin's lemma, and thus $\mathcal{G}^*=\sigma(x\mapsto\nu(B|\mathcal{G})(x),B\in\mathcal{X})$. Let $A\in\mathcal{G}$. Since $\nu(A|\mathcal{G})(x)=\delta_x(A)$ for $x\in F$, we have $A\cap F\in\mathcal{G}^*\cap F$; therefore, $\mathcal{G}^*\cap F=\mathcal{G}\cap F$, which implies that $\mathcal{G}\cap F$ is c.g.\\

\noindent\textit{$(ii)\Leftrightarrow(iii)$.}
Suppose that $\nu(\,\cdot\mid\mathcal{G})$ is a.e. proper on $F\in\mathcal{G}$, where $\nu(F)=1$. As $(ii)$ implies $(i)$, there exists $C\in\mathcal{G}$ such that $\nu(C)=1$ and $\mathcal{G}\cap C$ is c.g. Then $\mathcal{G}\cap C$ contains all of its atoms, so it follows for every $x\in C$ that $[x]_\mathcal{G}=[x]_\mathcal{G}\cap C=[x]_{\mathcal{G}\cap C}\in\mathcal{G}\cap C\subseteq\mathcal{G}$. Therefore, $\nu([x]_\mathcal{G}|\mathcal{G})(x)=\delta_x([x]_\mathcal{G})=1$, for $x\in C\cap F$. The converse result was already established.
\end{proof}

In the literature, a.e. total r.c.d.s are used in the derivation of some results in ergodic theory, see, e.g., \cite[Theorem 6.2, Theorem 7.21]{einsielder2011}. In contrast, \cite{berti2007,seidenfeld2001} are devoted to the study of r.c.d.s which are explicitly improper, and hence do not satisfy \textsf{(P)} and \textsf{(T)}, but appear in some familiar contexts. We discuss the properness of r.c.d.s in more detail in Section \ref{section:proper}. On the other hand, the conditions listed above are assumed in many situations where $R$ is not necessarily an r.c.d. For example, \textsf{(S)} and \textsf{(R)} are related to the stationarity and reversibility of Markov chains, respectively, where $R$ is the transition kernel of a Markov chain, see, e.g., \cite[Example 4]{berti2014}. In turn, \textsf{(T)} is a foundational property of disintegrations of probability measures, in the sense of \citep{berti1999}. The pair of reversibility conditions \textsf{(R)} and \textsf{(SR)} are themselves central in the analysis of exchangeable measure-valued \Polya urn processes with reinforcement kernel $R$ and initial composition $\nu$, see \cite[proof of Theorem 3.7]{sariev2024}. In the same setting, \textsf{(P)} is explicitly used to get a characterization of these models via predictive sufficiency, see \cite[Theorem 4]{sariev2025}. In the case of a system of probability kernels, compatibility-type conditions like \textsf{(SC)} are assumed in the context of Gibbs measures from statistical mechanics, see, e.g., \cite[Section 2]{preston1976} and \cite[p.\,538]{sokal1981}, and are the basis of some predictive construction of conditionally identically distributed sequences of random variables \cite{berti2021}; see also \cite{berti2013}.

Our goal is to study how these conditions are related to each other, beyond the more obvious relations mentioned above, and to determine the effects they have on $R$. These results are contained in Section \ref{section:connections}. As a further development, in Section \ref{section:charcterization}, we consider the inverse problem of characterizing certain classes of r.c.d.s by some combination of the properties stationarity, reversibility and totality. In particular, we derive  necessary and sufficient conditions for a probability kernel to be an r.c.d. for $\nu$ given some sub-$\sigma$-algebra, which is c.g. under $\nu$ or generated by a countable partition under $\nu$. In Section \ref{section:proper}, we return to the problem of improper r.c.d.s.

\section{Relations}\label{section:connections}

Let $R$ be a probability kernel on $\mathbb{X}$, and define the generated $\sigma$-algebra of $R$ by
\[\sigma(R):=\sigma(x\mapsto R_x(B),B\in\mathcal{X}).\]
Following the proof of Proposition \ref{result:introduction:proper:characterization}, it can be shown that $\sigma(R)$ is c.g. Moreover, the $\sigma(R)$-atom of each $x$ coincides with the set of all $y\in\mathbb{X}$ such that, as measures on $\mathbb{X}$, it holds $R_y(dz)=R_x(dz)$, see Lemma \ref{lemma:connections:atoms_R}. Thus, for $\mathcal{G}=\sigma(R)$, assumption \textsf{(T)} becomes analogous to the classical property that we can pull out known factors from conditional expectations.

\begin{lemma}\label{lemma:connections:atoms_R}
For each $x\in\mathbb{X}$,
\[[x]_{\sigma(R)}=\{y\in\mathbb{X}:R_y(dz)=R_x(dz)\}.\]
Moreover, $x\mapsto R_x([x]_{\sigma(R)})$ is $\sigma(R)$-measurable.
\end{lemma}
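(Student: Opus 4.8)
The plan is to exploit that $\sigma(R)$ is countably generated. As remarked above (and shown exactly as in the proof of Proposition~\ref{result:introduction:proper:characterization}), fixing a countable $\pi$-system $\{E_1,E_2,\ldots\}$ generating $\mathcal{X}$ — which exists since $(\mathbb{X},\mathcal{X})$ is standard Borel — one has
\[\sigma(R)=\sigma\bigl(x\mapsto R_x(E_n),\,n\geq1\bigr),\]
and it is convenient to write $f_n(x):=R_x(E_n)$, which is $\sigma(R)$-measurable by definition of $\sigma(R)$.

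For the identification of the atom I would prove the two inclusions separately. Since $R_x$ and $R_y$ are probability measures, Dynkin's lemma gives $R_x=R_y$ if and only if $f_n(x)=f_n(y)$ for all $n$; hence $\{y:R_y(dz)=R_x(dz)\}=\bigcap_n f_n^{-1}(\{f_n(x)\})$ is a member of $\sigma(R)$ that contains $x$, and therefore contains the atom $[x]_{\sigma(R)}$, which gives ``$\subseteq$''. For ``$\supseteq$'', observe that the collection of sets $G\in\sigma(R)$ which are saturated for the equivalence relation ``$R_x=R_y$'' (that is, $x\in G$ and $R_x=R_y$ imply $y\in G$) is a $\sigma$-algebra; it contains $\{x:R_x(B)\in C\}$ for every $B\in\mathcal{X}$ and every Borel $C\subseteq[0,1]$, hence it equals $\sigma(R)$. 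Consequently, if $R_y=R_x$ then $y$ belongs to every $G\in\sigma(R)$ containing $x$, i.e. $y\in[x]_{\sigma(R)}$.

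For the measurability statement, the key is to recognize the atom as the section of a single product-measurable set. Put
\[\Delta:=\{(x,z)\in\mathbb{X}^2:R_x(dw)=R_z(dw)\}=\bigcap_n\{(x,z):f_n(x)=f_n(z)\};\]
each set in this intersection is the preimage of the (closed, hence Borel) diagonal of $\mathbb{R}^2$ under the map $(x,z)\mapsto(f_n(x),f_n(z))$, which is $\sigma(R)\otimes\mathcal{X}$-measurable because $f_n$ is $\sigma(R)$-measurable in its first argument and $\mathcal{X}$-measurable in its second, so $\Delta\in\sigma(R)\otimes\mathcal{X}$. By the first part of the lemma the $x$-section of $\Delta$ is exactly $[x]_{\sigma(R)}$, whence
\[R_x\bigl([x]_{\sigma(R)}\bigr)=\int_\mathbb{X}\mathbbm{1}_\Delta(x,z)\,R_x(dz).\]
I would then invoke the standard measurability property of integration against a kernel: since $x\mapsto R_x(B)$ is $\sigma(R)$-measurable for every $B\in\mathcal{X}$, the map $x\mapsto\int_\mathbb{X}F(x,z)\,R_x(dz)$ is $\sigma(R)$-measurable for every nonnegative $\sigma(R)\otimes\mathcal{X}$-measurable $F$; this follows by the functional monotone class theorem, starting from $F=\mathbbm{1}_{A\times B}$ with $A\in\sigma(R)$ and $B\in\mathcal{X}$, for which the integral equals $\mathbbm{1}_A(x)R_x(B)$. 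Applying this to $F=\mathbbm{1}_\Delta$ yields the claim.

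The only delicate point is in the last paragraph: one must take care to place $\Delta$ in the product $\sigma(R)\otimes\mathcal{X}$ (rather than merely $\mathcal{X}\otimes\mathcal{X}$) and to apply the kernel-integration result relative to the sub-$\sigma$-algebra $\sigma(R)$ on the first coordinate; the remaining steps are routine $\pi$--$\lambda$ and monotone-class bookkeeping.
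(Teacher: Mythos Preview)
Your proof is correct and follows essentially the same approach as the paper: both identify the atom via the countable generators $f_n(x)=R_x(E_n)$, and both obtain the measurability of $x\mapsto R_x([x]_{\sigma(R)})$ by writing it as $\int\mathbbm{1}_\Delta(x,z)R_x(dz)$ for a product-measurable diagonal set and then invoking a $\pi$--$\lambda$/monotone-class argument on rectangles. The only cosmetic difference is that the paper places $\Delta$ in $\sigma(R)\otimes\sigma(R)$ (noting that $f_n$ is $\sigma(R)$-measurable in \emph{both} coordinates) rather than your $\sigma(R)\otimes\mathcal{X}$, and runs the Dynkin argument over rectangles $A\times B$ with $A,B\in\sigma(R)$; either choice works.
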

\begin{proof}
By hypothesis, $\mathcal{X}=\sigma(E_1,E_2,\ldots)$, for some $\pi$-class $\{E_1,E_2,\ldots\}$ on $\mathbb{X}$. Let us define
\[D:=\bigl\{(x,y)\in\mathbb{X}^2:R_x(dz)=R_y(dz)\bigr\},\]
and denote by $D_x$ the $x$-section of $D$. It follows from standard results that $D=\{(x,y)\in\mathbb{X}^2:R_x(E_n)=R_y(E_n),n\geq1\}$. Since $(x,y)\mapsto(R_x(E_n),R_y(E_n))$ is $\sigma(R)\otimes\sigma(R)$-measurable, for all $n\geq1$, and the diagonal of $[0,1]^2$ is a measurable set, we get $D\in\sigma(R)\otimes\sigma(R)$, so in particular $D_x\in\sigma(R)$, for all $x\in\mathbb{X}$. Fix $x\in\mathbb{X}$. As $x\in D_x$, we have $[x]_{\sigma(R)}\subseteq D_x$. Take $y\in D_x$. It follows that $R_y(dz)=R_x(dz)$, so for any $H\in\sigma(R)$, it holds $x\in H$ if and only if $y\in H$; thus, $[x]_{\sigma(R)}=[y]_{\sigma(R)}$. Then $y\in[x]_{\sigma(R)}$, which implies that $[x]_{\sigma(R)}=D_x$.

Regarding the second statement, define
\[\mathcal{A}:=\Bigl\{E\in\sigma(R)\otimes\sigma(R):x\mapsto\int_{\mathbb{X}}\mathbbm{1}_E(x,y)R_x(dy)\mbox{ is }\sigma(R)\mbox{-measurable}\Bigr\}.\]
Let $A,B\in\sigma(R)$. It follows that $x\mapsto\int_{\mathbb{X}}\mathbbm{1}_{A\times B}(x,y)R_x(dy)=R_x(B)\delta_x(A)$ is $\sigma(R)$-measurable, so $A\times B\in\mathcal{A}$. In addition, it is easily seen that $\mathcal{A}$ is a $\lambda$-class, so Dynkin's lemma implies that $\mathcal{A}=\sigma(R)\otimes\sigma(R)$. Therefore, $D\in\mathcal{A}$ and $x\mapsto R_x([x]_{\sigma(R)})=\int_{\mathbb{X}}\mathbbm{1}_D(x,y)R_x(dy)$ is $\sigma(R)$-measurable.
\end{proof}
\begin{remark}
In Lemma \ref{lemma:connections:atoms_R}, suppose that $R$ satisfies \textsf{(SC)} everywhere on $\mathbb{X}$. Then
\[[x]_{\sigma(R)}=\{y\in\mathbb{X}:R_y(A)=R_x(A)\mbox{ for all }A\in\sigma(R)\},\]
that is, $\sigma(R)$-atoms are determined by $R$ on the generated $\sigma$-algebra. Indeed, let $y\in\mathbb{X}$ be such that, as measures on $(\mathbb{X},\sigma(R))$, it holds $R_y(dz)=R_x(dz)$. Then $R_x(B)=\int_\mathbb{X}R_z(B)R_x(dz)=\int_\mathbb{X}R_z(B)R_y(dz)=R_y(B)$, for all $B\in\mathcal{X}$. Since $\mathcal{X}$ is c.g., it follows that $y\in\{v\in\mathbb{X}:R_v(dz)=R_x(dz)\}=[x]_{\sigma(R)}$.
\end{remark}

If $\sigma(R)\subseteq\mathcal{G}$, for some sub-$\sigma$-algebra $\mathcal{G}\subseteq\mathcal{X}$, i.e. $x\mapsto R_x(B)$ is $\mathcal{G}$-measurable, for all $B\in\mathcal{G}$, then $[x]_\mathcal{G}\subseteq[x]_{\sigma(R)}$, for all $x\in\mathbb{X}$. In fact, by Lemma \ref{lemma:connections:constant} below, the latter is equivalent to $R$ being constant on the atoms of $\mathcal{G}$. Conversely, by Lemma 5.18 in \cite{einsielder2011}, if $\mathcal{G}$ is c.g. under $\nu$ and $R$ is $\nu$-a.e. constant on the atoms of $\mathcal{G}$, then $x\mapsto R_x(B)$ is measurable w.r.t. the completion of $\mathcal{G}$.

\begin{lemma}\label{lemma:connections:constant}
For each $x\in\mathbb{X}$,
\[[x]_\mathcal{G}\subseteq[x]_{\sigma(R)}\quad\mbox{ if and only if }\quad R_{x'}(dz)=R_x(dz)\mbox{ for all }x'\in[x]_\mathcal{G}.\]
\end{lemma}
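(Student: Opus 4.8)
The plan is to read the claim off directly from Lemma~\ref{lemma:connections:atoms_R}. That lemma already supplies, for every $x\in\mathbb{X}$, the explicit set-theoretic description $[x]_{\sigma(R)}=\{y\in\mathbb{X}:R_y(dz)=R_x(dz)\}$, where the equality is understood as equality of probability measures on $(\mathbb{X},\mathcal{X})$. Consequently, a point $x'$ belongs to $[x]_{\sigma(R)}$ if and only if $R_{x'}(dz)=R_x(dz)$, and the whole statement reduces to unwinding this equivalence.

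For the forward implication, I would assume $[x]_\mathcal{G}\subseteq[x]_{\sigma(R)}$. Then every $x'\in[x]_\mathcal{G}$ also lies in $[x]_{\sigma(R)}$, and by the description above this means $R_{x'}(dz)=R_x(dz)$; since $x'\in[x]_\mathcal{G}$ was arbitrary, this holds for all such $x'$. For the converse, I would assume $R_{x'}(dz)=R_x(dz)$ for every $x'\in[x]_\mathcal{G}$. Each such $x'$ then satisfies the defining property of $[x]_{\sigma(R)}$, i.e. $x'\in[x]_{\sigma(R)}$, and since $x'$ ranges over all of $[x]_\mathcal{G}$ this yields the inclusion $[x]_\mathcal{G}\subseteq[x]_{\sigma(R)}$.

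There is essentially no obstacle: all the substantive work is already contained in the identification of $\sigma(R)$-atoms in Lemma~\ref{lemma:connections:atoms_R}, and what remains is a direct manipulation of definitions, valid for each fixed $x$. The only point worth flagging is that $R_{x'}(dz)=R_x(dz)$ must be read as equality of the full measures on $\mathcal{X}$ (equivalently, agreement on a countable generating $\pi$-class, as in the proof of Lemma~\ref{lemma:connections:atoms_R}), which is precisely the sense in which the atom description was established.
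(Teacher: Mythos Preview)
Your proposal is correct and follows essentially the same approach as the paper: both directions are read off directly from the identification $[x]_{\sigma(R)}=\{y\in\mathbb{X}:R_y(dz)=R_x(dz)\}$ established in Lemma~\ref{lemma:connections:atoms_R}, with no further ingredients.
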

\begin{proof}
If $[x]_\mathcal{G}\subseteq[x]_{\sigma(R)}$, then by Lemma \ref{lemma:connections:atoms_R}, it holds, for every $x'\in[x]_\mathcal{G}$, that $R_{x'}=R_x$. Conversely, take $x'\in[x]_\mathcal{G}$. By hypothesis, $R_{x'}=R_x$, so $x'\in[x]_{\sigma(R)}$, by Lemma \ref{lemma:connections:atoms_R}.
\end{proof}

The next statement relates the properties of stationarity, reversibility and totality. In particular, it shows that $R$ satisfies \textsf{(SR)} and \textsf{(R)} w.r.t. $\nu_{\sigma(R)}$ if and only if $R$ satisfies \textsf{(T)} w.r.t. $\sigma(R)$.

\begin{theorem}\label{result:connections:reverse<->total}
Let $R$ be a probability kernel on $\mathbb{X}$. Then
\begin{enumerate}
\item conditions \textnormal{\textsf{(SR)}} and \textnormal{\textsf{(S)}} w.r.t. $\nu_{\sigma(R)}$ imply \textnormal{\textsf{(T)}} w.r.t. $\sigma(R)$;
\item condition \textnormal{\textsf{(T)}} w.r.t. $\sigma(R)$ implies \textnormal{\textsf{(SR)}} and \textnormal{\textsf{(R)}} w.r.t. $\nu_{\sigma(R)}$.
\end{enumerate}
\end{theorem}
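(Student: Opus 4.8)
The plan is to prove the two implications separately, starting with $(ii)$ since it reduces to a direct computation. So, assume \textsf{(T)} holds w.r.t. $\sigma(R)$, say on $G\in\sigma(R)$ with $\nu(G)=1$. By Lemma~\ref{lemma:connections:atoms_R} every $\sigma(R)$-atom lies in $\sigma(R)$, so the elementary argument given in the introduction shows that \textsf{(T)} w.r.t. $\sigma(R)$ implies \textsf{(P)} w.r.t. $\sigma(R)$, i.e.\ $R_x(A)=\delta_x(A)$ for all $A\in\sigma(R)$ and $x\in G$. For \textsf{(R)} w.r.t. $\nu_{\sigma(R)}$ I would compare $R_x(dy)\nu_{\sigma(R)}(dx)$ with $R_y(dx)\nu_{\sigma(R)}(dy)$ on the generating $\pi$-class of rectangles $A\times B$, $A,B\in\sigma(R)$: by \textsf{(P)} and $\nu(G)=1$ both sides equal $\nu(A\cap B)$, and since the two measures are probabilities they agree on $\sigma(R)\otimes\sigma(R)$. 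For \textsf{(SR)} w.r.t. $\nu_{\sigma(R)}$, fix $x\in G$; then $R_x(G\cap[x]_{\sigma(R)})=1$ by \textsf{(P)} together with $R_x([x]_{\sigma(R)})=1$, and $R_y=R_x$ for every $y\in[x]_{\sigma(R)}$ by Lemma~\ref{lemma:connections:atoms_R}, so both $R_y(dz)R_x(dy)$ and $R_z(dy)R_x(dz)$ reduce to $R_x\otimes R_x$; the exceptional set $G^c$ belongs to $\sigma(R)$, as required.

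For $(i)$, assume \textsf{(SR)} and \textsf{(S)}, both w.r.t. $\nu_{\sigma(R)}$. Since \textsf{(SR)} implies \textsf{(SC)}, we have, as measures on $\mathbb{X}$, $\int_\mathbb{X}R_y(dz)R_x(dy)=R_x(dz)$ for $\nu$-a.e. $x$; equivalently $\int_\mathbb{X}R_y(B)R_x(dy)=R_x(B)$ for $\nu$-a.e. $x$ and every $B\in\mathcal{X}$. Write $\rho_B(x):=R_x(B)$ for $B\in\mathcal{X}$ and $\bar Rh(x):=\int_\mathbb{X}h(y)R_x(dy)$ for bounded measurable $h$; note $\bar Rh$ is bounded and $\sigma(R)$-measurable (clear for $h=\mathbbm{1}_B$, then extend by a monotone class argument). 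The key step is to show that
\[\mathcal{H}:=\{h:\mathbb{X}\to\mathbb{R}\text{ bounded and }\sigma(R)\text{-measurable}:\ \bar Rh=h\ \ \nu\text{-a.e.}\}\]
contains every bounded $\sigma(R)$-measurable function. The ingredients are: (a) \textsf{(SC)} gives $\bar R\rho_B=\rho_B$ $\nu$-a.e.\ for every $B\in\mathcal{X}$, so $\rho_B\in\mathcal{H}$, and the constants lie in $\mathcal{H}$; (b) $\mathcal{H}$ is closed under products, because for bounded $h\in\mathcal{H}$ Jensen's inequality gives $\bar R(h^2)\geq(\bar Rh)^2=h^2$ $\nu$-a.e., while \textsf{(S)} w.r.t. $\nu_{\sigma(R)}$ gives $\int\bar R(h^2)\,d\nu=\int h^2\,d\nu$, forcing $\bar R(h^2)=h^2$ $\nu$-a.e., after which polarization handles arbitrary products; (c) $\mathcal{H}$ is a vector space closed under bounded monotone $\nu$-a.e.\ limits, since $\bar R$ is an $L^2(\nu)$-contraction (again by Jensen and \textsf{(S)}). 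The finite products of the functions $\rho_B$ then form a multiplicative class contained in $\mathcal{H}$ and generating $\sigma(R)$, so the functional monotone class theorem gives the claim. (This argument uses \textsf{(SR)} only through \textsf{(SC)}.)

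To finish $(i)$: applying $\bar Rh=h$ $\nu$-a.e.\ to $h=\mathbbm{1}_A$ gives $R_x(A)=\delta_x(A)$ for $\nu$-a.e.\ $x$, for each $A\in\sigma(R)$. Since $\sigma(R)$ is c.g., I would pick a countable generating $\pi$-class $\{B_1,B_2,\ldots\}\subseteq\sigma(R)$ and put $F:=\bigcap_n\{x:R_x(B_n)=\delta_x(B_n)\}$, which lies in $\sigma(R)$ and satisfies $\nu(F)=1$; for fixed $x\in F$ the class $\{A\in\sigma(R):R_x(A)=\delta_x(A)\}$ is a $\lambda$-class containing $\{B_n\}$, hence all of $\sigma(R)$, so \textsf{(P)} w.r.t. $\sigma(R)$ holds on $F$. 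Finally, by Lemma~\ref{lemma:connections:atoms_R} we have $[x]_{\sigma(R)}\in\sigma(R)$, whence $R_x([x]_{\sigma(R)})=\delta_x([x]_{\sigma(R)})=1$ for $x\in F$, i.e.\ \textsf{(T)} w.r.t. $\sigma(R)$ holds on $G:=F$. I expect the only genuinely non-formal point to be (b): it is the interplay of Jensen's inequality with stationarity that makes the $\bar R$-fixed functions into an algebra, and this is exactly what lets the generators $\rho_B$---which pin down $\sigma(R)$ only as a $\sigma$-algebra---exhaust all bounded $\sigma(R)$-measurable functions.
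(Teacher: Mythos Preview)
Your argument for $(ii)$ is correct and coincides with the paper's: both derive \textsf{(P)} from \textsf{(T)}, use it to verify \textsf{(R)} w.r.t.\ $\nu_{\sigma(R)}$ on rectangles, and then exploit $R_y=R_x$ for $R_x$-a.e.\ $y$ (via $R_x([x]_{\sigma(R)})=1$) to reduce both sides of \textsf{(SR)} to $R_x\otimes R_x$.

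For $(i)$, your proof is correct but proceeds quite differently from the paper. The paper stays with \textsf{(SR)} throughout: after constructing $C^*\in\sigma(R)$ with $\nu(C^*)=1$ and $R_x(C^*)=1$ for $x\in C^*$, it chains \textsf{(SR)} at $x$ and at $R_x$-a.e.\ $y$ to obtain the identity $R_z(dv)R_y(dz)R_x(dy)=R_y(dv)R_y(dz)R_x(dy)$ on $\mathbb{X}^3$, which forces $R_y([y]_{\sigma(R)})=1$ for $R_x$-a.e.\ $y$, and then transfers this to $\nu$-a.e.\ $y$ via \textsf{(S)}. Your route is the ``harmonic functions form an algebra'' argument: \textsf{(SC)} makes each $\rho_B$ $\bar R$-invariant, Jensen plus \textsf{(S)} on $\sigma(R)$-measurable functions upgrades invariance to closure under squares (hence products), and the functional monotone class theorem then yields $R_x(A)=\delta_x(A)$ for every $A\in\sigma(R)$ and $\nu$-a.e.\ $x$. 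This is a genuinely different mechanism, and---as you note---it uses \textsf{(SR)} only through \textsf{(SC)}, so you actually establish the stronger implication \textsf{(SC)}${}+{}$\textsf{(S)} w.r.t.\ $\nu_{\sigma(R)}\Rightarrow$\textsf{(T)}. The paper's computation, by contrast, makes the role of self-reversibility explicit and avoids the monotone class machinery. One small remark: your justification in (c) via the $L^2(\nu)$-contraction is not needed for the monotone class theorem itself, since closure under \emph{pointwise} bounded monotone limits follows directly from monotone convergence in $\int h_n(y)R_x(dy)$; the $L^2$ argument would only be required if you insisted on $\nu$-a.e.\ monotone limits.
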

\begin{proof}
Suppose that $R$ satisfies \textsf{(SR)} and \textsf{(S)} w.r.t. $\nu_{\sigma(R)}$. Then there exists $C_0\in\sigma(R)$ such that $\nu(C_0)=1$ and, for all $x\in C_0$, as measures on $\mathbb{X}^2$,
\begin{equation}\label{proof:connections:reverse<->total:eq1}
R_y(dz)R_x(dy)=R_z(dy)R_x(dz),
\end{equation}
in the sense that $\int_{\mathbb{X}^2}f(y,z)R_y(dz)R_x(dy)=\int_{\mathbb{X}^2}f(z,y)R_y(dz)R_x(dy)$, for every non-negative $\mathcal{X}^2$-measurable function $f:\mathbb{X}^2\rightarrow\mathbb{R}_+$.

Define $C_n:=\{x\in C_{n-1}:R_x(C_{n-1})=1\}$, for $n\geq1$. Then $C_n\in\sigma(R)$ for all $n\geq1$. Since $R_x(\mathbb{X})=1$, we get from \textsf{(S)} w.r.t. $\nu_{\sigma(R)}$ that
\begingroup\allowdisplaybreaks
\begin{align*}
1=\nu(C_0)&=\int_\mathbb{X}R_x(C_0)\nu_{\sigma(R)}(dx)=1-\int_{\mathbb{X}}(1-R_x(C_0))\nu(dx)=1-\int_{C_1^c}(1-R_x(C_0))\nu(dx).
\end{align*}
\endgroup
But $R_x(C_0)<1$ for $x\in C_1^c$, so $\nu(C_1^c)=0$; otherwise, the term on the right-hand side of the equation becomes strictly less than $1$. Proceeding by induction, we get $\nu(C_n)=1$ for all $n\geq1$; thus, letting $C^*=\bigcap_{n=1}^\infty C_n$, it holds $C^*\in\sigma(R)$, $\nu(C^*)=1$, and $R_x(C_n)=1$, for all $x\in C^*$ and $n\geq1$. As a result, for all $x\in C^*$, we have that $R_x(C^*)=1$ and, since $C^*\subseteq C_0$, from \eqref{proof:connections:reverse<->total:eq1}, as measures on $\mathbb{X}^2$,
\begin{equation}\label{proof:connections:reverse<->total:eq2}
R_z(dv)R_y(dz)=R_v(dz)R_y(dv)\qquad\mbox{for }R_x\mbox{-a.e. }y.
\end{equation}
Applying \eqref{proof:connections:reverse<->total:eq1} and \eqref{proof:connections:reverse<->total:eq2} repeatedly, we obtain, for all $x\in C^*$, as measures on $\mathbb{X}^3$,
\begingroup\allowdisplaybreaks
\begin{align*}
R_z(dv)R_y(dz)R_x(dy)&=R_v(dz)R_y(dv)R_x(dy)=R_v(dz)R_v(dy)R_x(dv)\\
&=R_v(dy)R_z(dv)R_x(dz)=R_y(dv)R_z(dy)R_x(dz)=R_y(dv)R_y(dz)R_x(dy).
\end{align*}
\endgroup
Since $\mathcal{X}$ is c.g., we get, by Lemma \ref{lemma:connections:atoms_R},
\[R_y([y]_{\sigma(R)})=R_y(\{z\in\mathbb{X}:R_z(dv)=R_y(dv)\})=1\qquad\mbox{for }R_x\mbox{-a.e. }y,\]
that is, $R_x(\{y\in\mathbb{X}:R_y([y]_{\sigma(R)})=1\})=1$, for all $x\in C^*$. Moreover, $\{y\in\mathbb{X}:R_y([y]_{\sigma(R)})=1\}\in\sigma(R)$, so it follows from \textsf{(S)} w.r.t. $\nu_{\sigma(R)}$ that
\[\nu(\{y\in\mathbb{X}:R_y([y]_{\sigma(R)})=1\})=\int_{C^*}R_x(\{y\in\mathbb{X}:R_y([y]_{\sigma(R)})=1\})\nu_{\sigma(R)}(dx)=1.\]

Regarding the second statement, suppose that $R$ satisfies \textsf{(T)} w.r.t. $\sigma(R)$ on $G\in\sigma(R)$, where $\nu(G)=1$. By Lemma \ref{lemma:connections:atoms_R}, for every $x\in G$, we have, as measures on $\mathbb{X}$,
\[R_y(dz)=R_x(dz)\qquad\mbox{for }R_x\mbox{-a.e. }y.\]
It follows for every $A,B\in\mathcal{X}$ and all $x\in G$ that
\[\int_BR_y(A)R_x(dy)=\int_BR_x(A)R_x(dy)=R_x(A)R_x(B)=\int_AR_y(B)R_x(dy).\]
Finally, recall from Section \ref{section:introduction} that \textsf{(T)} implies \textsf{(P)} w.r.t. $\sigma(R)$. Then, for every $D,E\in\sigma(R)$, we have
\[\int_ER_x(D)\nu_{\sigma(R)}(dx)=\int_E\delta_x(D)\nu_{\sigma(R)}(dx)=\nu_{\sigma(R)}(E\cap D)=\int_DR_x(E)\nu_{\sigma(R)}(dx).\]
\end{proof}

The question naturally arises whether \textsf{(SR)} alone is equivalent to \textsf{(T)} w.r.t. $\sigma(R)$. As the next example demonstrates, the answer is negative. Nevertheless, it shows that when $\mathbb{X}$ is countable and $\nu(\{x\})>0$, for all $x\in\mathbb{X}$, then \textsf{(SR)} is enough to imply that $R$ is \textit{a.e. trivial} on the atoms of $\sigma(R)$, in the sense that
\[R_x([x]_{\sigma(R)})\in\{0,1\}\qquad\mbox{for }\nu\mbox{-a.e. }x.\]

\begin{example}\label{example:connections:countable}
Let $\mathbb{X}$ be countable, and $\nu(\{x\})>0$, for all $x\in\mathbb{X}$. In that case, $R$ is best described in terms of a matrix $[r_{xy}]_{x,y\in\mathbb{X}}$, where $r_{xy}=R_x(\{y\})$, so that \textsf{(SR)} becomes
\begin{equation}\label{eq:connections:SR->T}
r_{xy}r_{yz}=r_{xz}r_{zy}\qquad\mbox{for all }x,y,z\in\mathbb{X}.
\end{equation}
Fix $x\in\mathbb{X}$ such that $R_x([x]_{\sigma(R)})>0$. We can assume $r_{xx}>0$, without loss of generality, as there exists at least one $x'\in[x]_{\sigma(R)}$ such that $r_{x'x'}>0$ and $R_{x'}=R_x$. Let $y\notin[x]_{\sigma(R)}$ be such that $r_{xy}>0$. It follows from \eqref{eq:connections:SR->T} with $z=y$ that $r_{yx}=r_{xx}$. Take $z\in\mathbb{X}$. Then, $(i)$ if $z\in[x]_{\sigma(R)}$, we have $r_{xy}=r_{zy}>0$, so $r_{yz}=r_{xz}$ from \eqref{eq:connections:SR->T}; $(ii)$ if $z\notin[x]_{\sigma(R)}$ and $r_{xz}=0$, we have $r_{yz}=0=r_{xz}$ from \eqref{eq:connections:SR->T}; $(iii)$ if $z\notin[x]_{\sigma(R)}$ and $r_{xz}>0$, we have $r_{yx}r_{xz}=r_{yz}r_{zx}$ and $r_{yx}=r_{xx}=r_{zx}$, so $r_{yz}=r_{xz}$. As a result $R_y=R_x$, which implies from Lemma \ref{lemma:connections:atoms_R} that $y\in[x]_{\sigma(R)}$, absurd. Therefore, $r_{xy}=0$ and, ultimately, $R_x([x]_{\sigma(R)})=1$.

For $x\in\mathbb{X}$ such that $R_x([x]_{\sigma(R)})=0$ and $y\notin[x]_{\sigma(R)}$ with $r_{xy}>0$, we cannot exclude the possibility of $r_{yy}\neq r_{xy}$ to get a contradiction. For example, if $\mathbb{X}=\{1,2,3,4\}$, then
\[R=\left[\begin{array}{cccc} 0 & \frac{1}{3} & \frac{1}{3} & \frac{1}{3} \\ 0 & 1 & 0 & 0 \\ 0 & 0 & \frac{1}{2} & \frac{1}{2} \\ 0 & 0 & \frac{1}{2} & \frac{1}{2} \end{array}\right]\]
satisfies \textsf{(SR)}, and yet $R_1([1]_{\sigma(R)})=r_{11}=0$; thus, $R$ is trivial, but not total w.r.t. $\sigma(R)$.
\end{example}

On the other hand, Example 1 in \cite{berti2007} demonstrates that triviality is not an intrinsic property of r.c.d.s, despite the fact that they satisfies both \textsf{(R)} and \textsf{(SR)}.

\begin{example}[\cite{berti2007}, Example 1]\label{example:connections:improper}
Let $(\mathbb{X},\mathcal{X})=(\mathbb{R},\mathcal{B}(\mathbb{R}))$, and $\pi$ be a \textit{diffuse} probability measure on $\mathbb{X}$, i.e. $\pi(\{x\})=0$, for all $x\in\mathbb{X}$. Define
\[\nu:=\frac{1}{2}(\pi+\delta_0),\qquad\mathcal{G}:=\{B\in\mathcal{X}:\nu(B)\in\{0,1\}\},\qquad\mbox{and}\qquad R(\cdot)=\nu(\,\cdot\mid\mathcal{G}).\]
Then $\mathcal{G}$ is $\sigma$-algebra such that $[x]_\mathcal{G}=\{x\}\in\mathcal{X}$, for all $x\in\mathbb{X}$. Moreover, as measures on $\mathbb{X}$, we have $R_x(dy)=\nu(dy)$ for $\nu$-a.e. $x$, say on $C\in\mathcal{G}$ with $\nu(C)=1$; thus, $\sigma(R)\cap C=\{\emptyset,C\}$. It follows for every $A,B\in\mathcal{X}$ and all $x\in C$ that
\[\int_AR_y(B)R_x(dy)=\int_AR_y(B)\nu(dy)=\nu(A)\nu(B)=\int_BR_y(A)R_x(dy).\]
On the other hand, we have $R_0([0]_\mathcal{G})=\nu(\{0\})=1/2$, which implies that $R$ is not a.e. trivial w.r.t. $\mathcal{G}$. Notice, however, that $[0]_\mathcal{G}\notin\mathcal{G}$ and
\[R_x([x]_{\sigma(R)})=R_x(C)=\nu(C)=1,\qquad\mbox{for all }x\in C.\]
\end{example}

The following simple example relates to the second part of Theorem \ref{result:connections:reverse<->total}, showing that \textsf{(T)} w.r.t. $\sigma(R)$ alone is not enough to recover \textsf{(R)} (w.r.t. $\nu$).

\begin{example}\label{example:connections:three_colors}
Let $\mathbb{X}=\{1,2,3\}$. Suppose that $R$ is given in matrix form by
\[R=\left[\begin{array}{ccc} 1 & 0 & 0 \\ 0 & \frac{1}{2} & \frac{1}{2} \\ 0 & \frac{1}{2} & \frac{1}{2} \end{array}\right].\]
By Lemma \ref{lemma:connections:atoms_R}, the $\sigma(R)$-atoms are $\{1\}$ and $\{2,3\}$, so $R_x([x]_{\sigma(R)})=1$, for all $x\in\mathbb{X}$. On the other hand, it is easily seen that $R$ will not satisfy \textsf{(R)} unless $\nu$ is uniform on $\mathbb{X}$; in fact, $R$ will not satisfy \textsf{(S)} unless $\nu$ is uniform on $\mathbb{X}$.
\end{example}

Incidentally, Examples \ref{example:connections:improper} and \ref{example:connections:three_colors} help us understand the effects of \textsf{(T)} on $R$, and ultimately the structure of any r.c.d. By Lemma \ref{lemma:connections:atoms_R} and since $\{[x]_{\sigma(R)}\}_{x\in\mathbb{X}}$ forms a partition of $\mathbb{X}$, for all $x,y\in\mathbb{X}$, we have $[x]_{\sigma(R)}=[y]_{\sigma(R)}$ if and only if, as measures on $\mathbb{X}$, it holds $R_x(dz)=R_y(dz)$. Therefore, condition \textsf{(T)} w.r.t. $\sigma(R)$ imposes a ``block-diagonal'' design on $R$ in the sense that, for $\nu$-a.e. $x,y$ belonging to the same $\sigma(R)$-atom, the measures $R_x\equiv R_y$ are identical and give zero probability outside this atom. Theorem \ref{result:characterization:c.g.under<->total+reverse} shows, broadly speaking, that the additional assumption of \textsf{(S)} implies that $R_x$ is proportional to $\nu$ conditionally given $[x]_{\sigma(R)}$, which ultimately means that $R$ is an r.c.d. for $\nu$ given $\sigma(R)$.

Now, assume that $R(\cdot)=\nu(\,\cdot\mid\mathcal{G})$, for some sub-$\sigma$-algebra $\mathcal{G}\subseteq\mathcal{X}$. Since $\sigma(R)\subseteq\mathcal{G}$ and, by construction, $x\mapsto R_x(B)$ is $\sigma(R)$-measurable, for all $B\in\mathcal{X}$, we get $R(\cdot)=\nu(\,\cdot\mid\sigma(R))$. Moreover, as $\sigma(R)$ is c.g., Proposition \ref{result:introduction:proper:characterization} implies that $R$ satisfies \textsf{(T)} w.r.t. $\sigma(R)$. Therefore, $R$ and, consequently, any r.c.d. has the aforementioned ``block-diagonal'' design at the level of the $\sigma(R)$-atoms. Since $[x]_\mathcal{G}\subseteq[x]_{\sigma(R)}$, for all $x\in\mathbb{X}$, and $R_{x'}=R_x$, for all $x'\in[x]_{\sigma(R)}$, there is no additional structure emerging at the resolution of the $\mathcal{G}$-atoms, see also \cite[Section 29.2]{loeve1978}. The role of $\mathcal{G}$ and $\nu$ is to locate the $\sigma(R)$-atoms and determine the value of $R$ within each atom. On the other hand, the above observations can serve as a necessary criterion for a probability kernel to be an r.c.d.; e.g., in Example \ref{example:connections:countable}, the $\sigma(R)$-atoms are $\{1\}$, $\{2\}$, $\{3,4\}$, so $R$ will not be an r.c.d. for $\nu$ unless $\nu(\{1\})=0$.

\section{Characterization of a.e. proper r.c.d.s}\label{section:charcterization}

In this section, we demonstrate how certain classes of r.c.d.s are characterized by some combination of the properties stationarity, reversibility and totality. In particular, Theorem \ref{result:characterization:c.g.under<->total+reverse} states that a probability kernel $R$ on $\mathbb{X}$ which satisfies \textsf{(S)} and \textsf{(T)} w.r.t. $\sigma(R)$ is necessarily an r.c.d. for $\nu$ given some c.g. under $\nu$ sub-$\sigma$-algebra $\mathcal{G}\subseteq\mathcal{X}$, see also \cite[p.\,741]{blackwell1975}, \cite[Lemma 1]{berti2013} and \cite[Proposition 5.19]{einsielder2011}. Moreover, $\mathcal{G}$ is \textit{$\nu$-essentially unique} among the class of c.g. under $\nu$ sub-$\sigma$-algebras, in the sense that the difference between $\mathcal{G}$ and any other such conditioning $\sigma$-algebra is up to a set of $\nu$-probability zero. For the latter, we need the following preliminary lemma.

\begin{lemma}\label{lemma:characterization:uniqueness}
Let $R(\cdot)=\nu(\,\cdot\mid\mathcal{G})$. If $R$ satisfies \textnormal{\textsf{(P)}}, then there exists $C\in\mathcal{G}$ such that $\nu(C)=1$ and
\[\mathcal{G}\cap C=\sigma(R)\cap C.\]
\end{lemma}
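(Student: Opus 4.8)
The plan is to prove the nontrivial inclusion $\mathcal{G}\cap C\subseteq\sigma(R)\cap C$ on a suitable full-measure set $C$, since the reverse inclusion is automatic: as $R(\cdot)=\nu(\,\cdot\mid\mathcal{G})$ is an r.c.d.\ given $\mathcal{G}$, property $(iii)$ gives $\sigma(R)\subseteq\mathcal{G}$, hence $\sigma(R)\cap C\subseteq\mathcal{G}\cap C$ for every $C\in\mathcal{G}$. The one observation that makes everything work is that, because $R_x(B)=\nu(B|\mathcal{G})(x)$ for all $B\in\mathcal{X}$, we have $\sigma(R)=\sigma(x\mapsto\nu(B|\mathcal{G})(x),B\in\mathcal{X})$; this is precisely the $\sigma$-algebra $\mathcal{G}^*$ appearing in the proof of the implication $(ii)\Rightarrow(i)$ of Proposition \ref{result:introduction:proper:characterization}, and in particular the map $x\mapsto R_x(A)$ is $\sigma(R)$-measurable for \emph{every} $A\in\mathcal{X}$, in particular for every $A\in\mathcal{G}$ (not merely $\mathcal{G}$-measurable).

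Concretely, I would take $C:=F$, where $F\in\mathcal{G}$ with $\nu(F)=1$ is the set furnished by \textsf{(P)}, so that $R_x(A)=\delta_x(A)$ for all $A\in\mathcal{G}$ and all $x\in F$. Fix $A\in\mathcal{G}$. Since $x\mapsto R_x(A)$ is $\sigma(R)$-measurable, the set $\{x\in\mathbb{X}:R_x(A)=1\}$ belongs to $\sigma(R)$. Intersecting with $F$ and invoking properness, for $x\in F$ one has $R_x(A)=\delta_x(A)$, which equals $1$ exactly when $x\in A$; hence
\[A\cap F=\{x\in F:R_x(A)=1\}\in\sigma(R)\cap F.\]
As $A\in\mathcal{G}$ was arbitrary, this yields $\mathcal{G}\cap F\subseteq\sigma(R)\cap F$, and combined with the trivial reverse inclusion we get $\mathcal{G}\cap F=\sigma(R)\cap F$, which is the assertion with $C=F$. (Note that the standing conclusion that $\mathcal{G}$ is c.g.\ under $\nu$, available from \textsf{(P)} via Proposition \ref{result:introduction:proper:characterization}, is not even needed for this argument.)

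I do not expect a genuine obstacle here: the statement is essentially a distilled form of the step ``$A\cap F\in\mathcal{G}^*\cap F$, therefore $\mathcal{G}^*\cap F=\mathcal{G}\cap F$'' already carried out inside the proof of Proposition \ref{result:introduction:proper:characterization}. The only point deserving explicit care is the identification $\sigma(R)=\sigma(x\mapsto\nu(B|\mathcal{G})(x),B\in\mathcal{X})$, which is immediate from the definition of $\sigma(R)$ together with $R=\nu(\,\cdot\mid\mathcal{G})$; it is this identification that makes the measurability of $x\mapsto R_x(A)$ used above genuinely with respect to $\sigma(R)$ rather than just $\mathcal{G}$, and hence places $A\cap F$ in $\sigma(R)\cap F$.
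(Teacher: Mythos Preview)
Your argument is correct and is exactly the approach the paper intends: it defers to the step ``$A\cap F\in\mathcal{G}^*\cap F$, therefore $\mathcal{G}^*\cap F=\mathcal{G}\cap F$'' from the proof of Proposition~\ref{result:introduction:proper:characterization}, with the identification $\mathcal{G}^*=\sigma(R)$ that you spell out. Your choice $C=F$ and the computation $A\cap F=\{x\in F:R_x(A)=1\}\in\sigma(R)\cap F$ match the paper's reasoning verbatim.
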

\begin{proof}
The proof of this fact is essentially contained in the proof of Proposition \ref{result:introduction:proper:characterization}, part $(ii)\Rightarrow(i)$.
\end{proof} 

\begin{theorem}\label{result:characterization:c.g.under<->total+reverse}
Let $R$ be a probability kernel on $\mathbb{X}$. The following are equivalent:
\begin{enumerate}
\item[(i)] $R(\cdot)=\nu(\,\cdot\mid\mathcal{G})$, for some c.g. under $\nu$ sub-$\sigma$-algebra $\mathcal{G}\subseteq\mathcal{X}$;
\item[(ii)] $R$ satisfies \textnormal{\textsf{(S)}} and \textnormal{\textsf{(T)}} w.r.t. $\sigma(R)$.
\end{enumerate}
Moreover, $\mathcal{G}$ is $\nu$-essentially unique among all sub-$\sigma$-algebras which are c.g. under $\nu$.
\end{theorem}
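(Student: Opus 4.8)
The plan is to prove the two implications separately and then address the uniqueness clause.

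For $(i)\Rightarrow(ii)$: Suppose $R(\cdot)=\nu(\,\cdot\mid\mathcal{G})$ with $\mathcal{G}$ c.g.\ under $\nu$. As already noted in the discussion preceding the theorem, the $\sigma(R)$-measurability of $x\mapsto R_x(B)$ forces $R(\cdot)=\nu(\,\cdot\mid\sigma(R))$ as well. Stationarity \textsf{(S)} w.r.t.\ $\nu_{\sigma(R)}$ is immediate from property $(iv)$ of an r.c.d.\ applied with $A=\mathbb{X}$. Since $\mathcal{G}$ is c.g.\ under $\nu$ and $R(\cdot)=\nu(\,\cdot\mid\mathcal{G})$, Proposition \ref{result:introduction:proper:characterization} gives that $R$ satisfies \textsf{(P)} w.r.t.\ $\mathcal{G}$; then Lemma \ref{lemma:characterization:uniqueness} yields $C\in\mathcal{G}$ with $\nu(C)=1$ and $\mathcal{G}\cap C=\sigma(R)\cap C$, so $\sigma(R)$ is c.g.\ under $\nu$ too, and a second application of Proposition \ref{result:introduction:proper:characterization} (to $\sigma(R)$ in place of $\mathcal{G}$) gives \textsf{(T)} w.r.t.\ $\sigma(R)$.

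For the substantive direction $(ii)\Rightarrow(i)$: assume $R$ satisfies \textsf{(S)} and \textsf{(T)} w.r.t.\ $\sigma(R)$. Take $\mathcal{G}:=\sigma(R)$, which is c.g.\ by the remark following its definition in Section \ref{section:connections}; it remains to verify properties $(iii)$ and $(iv)$ of an r.c.d. Property $(iii)$ holds by construction. For $(iv)$ I would use the ``block-diagonal'' structure that \textsf{(T)} imposes via Lemma \ref{lemma:connections:atoms_R}: on a set $G\in\sigma(R)$ of full $\nu$-measure, $R_x$ is supported on $[x]_{\sigma(R)}$ and $R_{x'}=R_x$ for $R_x$-a.e.\ $x'$. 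The key point to establish is that for $B\in\mathcal{X}$ fixed, the function $x\mapsto R_x(B)$ — which is $\sigma(R)$-measurable — together with stationarity reproduces $\nu(A\cap B)$ for $A\in\sigma(R)$. Concretely, for $A\in\sigma(R)$, apply \textsf{(S)} to the kernel restricted appropriately: one wants $\int_A R_x(B)\,\nu(dx)=\nu(A\cap B)$. I would first prove this for $B\in\sigma(R)$, where it follows from \textsf{(T)}$\Rightarrow$\textsf{(P)} exactly as in the last display of the proof of Theorem \ref{result:connections:reverse<->total}. For general $B\in\mathcal{X}$, the idea is to show that $x\mapsto R_x(B)$ serves as a version of $\nu(B\mid\sigma(R))$: since $R$ is constant on $\sigma(R)$-atoms and satisfies \textsf{(S)}, the measure $A\mapsto\int_A R_x(B)\,\nu(dx)$ on $\sigma(R)$ is absolutely continuous w.r.t.\ $\nu_{\sigma(R)}$ with a $\sigma(R)$-measurable density, and one checks it agrees with $\nu(A\cap B)$ by a monotone-class / functional argument, integrating the identity $\int_{\mathbb{X}}f\,dR_x=R_x(B)$-type relations against $\nu$ and using stationarity to collapse the iterated integral. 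The main obstacle is precisely this step: passing from the ``internal'' consistency of $R$ on $\sigma(R)$ (given by \textsf{(T)}) to the ``external'' averaging identity $(iv)$ against $\nu$ for arbitrary $B\in\mathcal{X}$, i.e.\ showing stationarity is exactly the missing ingredient that pins down $R_x$ as $\nu$ conditioned on the atom $[x]_{\sigma(R)}$ rather than some other measure supported there.

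For the uniqueness clause: suppose $\mathcal{G}'$ is another sub-$\sigma$-algebra, c.g.\ under $\nu$, with $R(\cdot)=\nu(\,\cdot\mid\mathcal{G}')$. By Proposition \ref{result:introduction:proper:characterization}, $R$ satisfies \textsf{(P)} w.r.t.\ $\mathcal{G}'$, so Lemma \ref{lemma:characterization:uniqueness} gives $C'\in\mathcal{G}'$ with $\nu(C')=1$ and $\mathcal{G}'\cap C'=\sigma(R)\cap C'$. Combining with the analogous statement for $\mathcal{G}=\sigma(R)$ (trivially $\mathcal{G}\cap C=\sigma(R)\cap C$ with $C=\mathbb{X}$), intersecting the two full-measure sets gives a single $C''\in\sigma(R)\cap\mathcal{G}'$ with $\nu(C'')=1$ and $\mathcal{G}'\cap C''=\sigma(R)\cap C''=\mathcal{G}\cap C''$, which is the asserted $\nu$-essential uniqueness. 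I expect this part to be routine once Lemma \ref{lemma:characterization:uniqueness} is in hand; the only mild care needed is in checking that $C''$ can be taken in both $\sigma$-algebras simultaneously, which follows since $\sigma(R)\subseteq\mathcal{G}'$ up to the null set.
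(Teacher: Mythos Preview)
Your overall plan (take $\mathcal{G}=\sigma(R)$ and verify property $(iv)$ of an r.c.d.) is exactly what the paper does, and your treatment of $(i)\Rightarrow(ii)$ and of uniqueness is correct, if slightly more roundabout than the paper's. For $(i)\Rightarrow(ii)$ the paper simply observes that $\sigma(R)\subseteq\mathcal{G}$ forces $[x]_\mathcal{G}\subseteq[x]_{\sigma(R)}$, so \textsf{(T)} w.r.t.\ $\mathcal{G}$ (from Proposition~\ref{result:introduction:proper:characterization}) immediately yields \textsf{(T)} w.r.t.\ $\sigma(R)$; you reach the same conclusion via Lemma~\ref{lemma:characterization:uniqueness}, which also works.

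The one real gap is in your $(ii)\Rightarrow(i)$. Your proposed route---establish $(iv)$ first for $B\in\sigma(R)$ and then extend to general $B\in\mathcal{X}$ by a monotone-class argument---does not close: the class of $B$ for which $\int_A R_x(B)\,\nu(dx)=\nu(A\cap B)$ is a $\lambda$-class, but $\sigma(R)$ need not generate $\mathcal{X}$, so Dynkin's lemma does not apply. The fix, which is the paper's argument, is a one-line pointwise identity that handles all $B\in\mathcal{X}$ at once. Since \textsf{(T)} implies \textsf{(P)} w.r.t.\ $\sigma(R)$ on some $G\in\sigma(R)$ with $\nu(G)=1$, for every $A\in\sigma(R)$ and every $x\in G$ one has $R_x(A)\in\{0,1\}$, and hence
\[
R_x(A\cap B)=R_x(B)\,\delta_x(A)\qquad\text{for all }B\in\mathcal{X},
\]
simply because $R_x(A)=1$ forces $R_x(A^c)=0$ and vice versa. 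Now a single application of \textsf{(S)} to the set $A\cap B\in\mathcal{X}$ gives
\[
\nu(A\cap B)=\int_G R_x(A\cap B)\,\nu(dx)=\int_\mathbb{X} R_x(B)\,\delta_x(A)\,\nu(dx)=\int_A R_x(B)\,\nu(dx),
\]
which is exactly property $(iv)$. No monotone-class step is needed; properness already delivers the ``$\delta_x(A)$ comes out of $R_x$'' factorization for arbitrary $B$, and stationarity is what converts the $R_x$-average into a $\nu$-average.
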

\begin{proof}
If $R(\cdot)=\nu(\,\cdot\mid\mathcal{G})$ and $\mathcal{G}$ is c.g. under $\nu$, then $R$ satisfies \textsf{(S)} and, by Proposition \ref{result:introduction:proper:characterization}, \textsf{(T)} w.r.t. $\mathcal{G}$. Since $\sigma(R)\subseteq\mathcal{G}$, we have $[x]_{\mathcal{G}}\subseteq[x]_{\sigma(R)}$ and $R_x([x]_{\sigma(R)})\geq R_x([x]_\mathcal{G})=1$, for $\nu$-a.e. $x$; thus, $R$ satisfies \textsf{(T)} w.r.t. $\sigma(R)$.

Conversely, suppose that $R$ satisfies \textsf{(S)} and \textsf{(T)} w.r.t. $\sigma(R)$ on $G\in\sigma(R)$, where $\nu(G)=1$. It follows from Section \ref{section:introduction} that $R$ is proper w.r.t. $\sigma(R)$ on $G$. Let $A\in\sigma(R)$ and $B\in\mathcal{X}$. Fix $x\in G$. If $x\in A$, then $R_x(A)=1$, so $R_x(A\cap B)=R_x(B)$; otherwise, if $x\in A^c$, then $R_x(A)=0$, so $R_x(A\cap B)=0$. Therefore, $R_x(A\cap B)=R_x(B)\delta_x(A)$. It now follows from \textsf{(S)} and $\nu(G)=1$ that
\[\int_AR_x(B)\nu(dx)=\int_\mathbb{X}R_x(B)\delta_x(A)\nu(dx)=\int_GR_x(A\cap B)\nu(dx)=\nu(A\cap B).\]
By construction, $x\mapsto R_x(B)$ is $\sigma(R)$-measurable, for all $B\in\mathcal{X}$, so $\nu(\,\cdot\mid\sigma(R))$, with $\sigma(R)$ being c.g. as required.

Regarding the last statement, suppose that $R(\cdot)=\nu(\,\cdot\mid\mathcal{H})$, where $\mathcal{H}$ is another c.g. under $\nu$ sub-$\sigma$-algebra of $\mathcal{X}$ on $\mathbb{X}$. By Lemma \ref{lemma:characterization:uniqueness}, there exist $C_1,C_2\in\mathcal{X}$ such that $\nu(C_1)=1$, $\nu(C_2)=1$, $\mathcal{G}\cap C_1=\sigma(R)\cap C_1$ and $\mathcal{H}\cap C_2=\sigma(R)\cap C_2$. Therefore,
\[\mathcal{G}\cap C_1\cap C_2=\sigma(R)\cap C_1\cap C_2=\mathcal{H}\cap C_1\cap C_2.\]
\end{proof}

One consequence of Theorem \ref{result:characterization:c.g.under<->total+reverse}, related to the discussion in Example \ref{example:connections:three_colors}, is that \textsf{(T)} w.r.t. $\sigma(R)$ together with \textsf{(S)} implies \textsf{(R)}. On the other hand, by Theorem \ref{result:connections:reverse<->total}, any $R$ satisfying \textsf{(S)} and \textsf{(SR)} is an r.c.d. for $\nu$ given some c.g. under $\nu$ sub-$\sigma$-algebra.

\begin{remark}\label{remark:characterization:c.g.under<->total+reverse:nu}
If our goal is to show that $R$ is an r.c.d. for \textit{some} probability measure on $\mathbb{X}$, as is the case in \cite{berti2013,preston1976}, then it is possible to drop condition \textsf{(S)} from Theorem \ref{result:characterization:c.g.under<->total+reverse} altogether. Indeed, suppose that $R$ satisfies only \textsf{(T)} w.r.t. $\sigma(R)$ (and $\nu$). Let us define the probability measure
\[\pi(dy):=\int_\mathbb{X}R_x(dy)\nu(dx).\]
It follows for every $A\in\sigma(R)$ that $\pi(A)=\int_\mathbb{X}R_x(A)\nu(dx)=\int_\mathbb{X}\delta_x(A)\nu(dx)=\nu(A)$; thus, $\pi_{\sigma(R)}\equiv\nu_{\sigma(R)}$, and so $R$ satisfies \textsf{(T)} w.r.t. $\sigma(R)$ and $\pi$. Moreover, we have from Lemma \ref{lemma:connections:atoms_R} that, for every $B\in\mathcal{X}$,
\[\int_\mathbb{X}R_x(B)\pi(dx)=\int_{\mathbb{X}^2}R_x(B)R_v(dx)\nu(dv)=\int_\mathbb{X}R_x(B)R_x(\mathbb{X})\nu(dx)=\int_\mathbb{X}R_x(B)\nu(dx)=\pi(B).\]
Therefore, from the proof of Theorem \ref{result:characterization:c.g.under<->total+reverse}, we get $R(\cdot)=\pi(\,\cdot\mid\sigma(R))$.
\end{remark}

R.c.d.s for which the conditioning $\sigma$-algebra $\mathcal{G}=\sigma(G_1,G_2,\ldots)$ is generated by a countable partition $\{G_1,G_2,\ldots\}$ of $\mathbb{X}$ form an important class. In that case, $\{G_1,G_2,\ldots\}$ make up the collection of $\mathcal{G}$-atoms. Furthermore, using a monotone class argument and the fact that $\mathcal{X}$ is c.g., we can show that, as measures on $\mathbb{X}$,
\begin{equation}\label{eq:characterization:partition}
\nu(\,\cdot\mid\mathcal{G})(x)=\sum_m\nu(\,\cdot\mid G_m)\cdot\mathbbm{1}_{G_m}(x)=\nu(\,\cdot\mid[x]_\mathcal{G}),\qquad\mbox{for }\nu\mbox{-a.e. }x,
\end{equation}
where we let $\nu(\,\cdot\mid G_m)=1$ whenever $\nu(G_m)=0$. Therefore, conditioning on $\mathcal{G}$ here means conditioning on the $\mathcal{G}$-atom that actually occurs, so in particular,
\[\nu([x]_\mathcal{G}|\mathcal{G})(x)=1\qquad\mbox{for }\nu\mbox{-a.e. }x.\]
On the other hand, it is not hard to see from \eqref{eq:characterization:partition} that $\nu(\,\cdot\mid\mathcal{G})(x)$ is absolutely continuous w.r.t. $\nu$, for $\nu$-a.e. $x$, which we will denote by $\nu(\,\cdot\mid\mathcal{G})(x)\ll\nu$.

The next result is a concretization of Theorem \ref{result:characterization:c.g.under<->total+reverse} and characterizes all r.c.d.s for which the conditioning $\sigma$-algebra $\mathcal{G}$ is \textit{generated by a countable partition (g.c.p.) under $\nu$}, in the sense that $\mathcal{G}\cap C=\sigma(D_1,D_2,\ldots)$ for some $C,D_1,D_2,\ldots\in\mathcal{G}$ such that $\nu(C)=1$ and $\{D_1,D_2,\ldots\}$ is a partition of $C$. In that case, it still holds
\[\nu(\,\cdot\mid\mathcal{G})(x)=\sum_n\nu(\,\cdot\mid D_n)\cdot\mathbbm{1}_{D_n}(x)\qquad\mbox{for }\nu\mbox{-a.e. }x.\]

\begin{theorem}\label{result:characterization:c.g.under<->total+reverse:dominated}
Let $R$ be a probability kernel on $\mathbb{X}$. The following are equivalent:
\begin{enumerate}
\item[(i)] $R(\cdot)=\nu(\,\cdot\mid\mathcal{G})$, for some g.c.p. under $\nu$ sub-$\sigma$-algebra $\mathcal{G}\subseteq\mathcal{X}$;
\item[(ii)] $R$ satisfies \textnormal{\textsf{(S)}}, \textnormal{\textsf{(T)}} w.r.t. $\sigma(R)$, and $R_x\ll\nu$ for $\nu$-a.e. $x$.
\end{enumerate}
Moreover, $\mathcal{G}$ is $\nu$-essentially unique among all sub-$\sigma$-algebras which are g.c.p. under $\nu$.
\end{theorem}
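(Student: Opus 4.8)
The plan is to bootstrap from Theorem~\ref{result:characterization:c.g.under<->total+reverse}, which already pins $R$ down as $\nu(\,\cdot\mid\sigma(R))$ with $\sigma(R)$ merely c.g.\ under $\nu$; the only new work is to recognize that the extra hypothesis $R_x\ll\nu$ is exactly what forces $\sigma(R)$ to be generated by a countable partition under $\nu$.

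For $(i)\Rightarrow(ii)$ I would note that a g.c.p.-under-$\nu$ $\sigma$-algebra is in particular c.g.\ under $\nu$, so \textsf{(S)} and \textsf{(T)} w.r.t.\ $\sigma(R)$ are immediate from Theorem~\ref{result:characterization:c.g.under<->total+reverse}; and the domination $R_x\ll\nu$ for $\nu$-a.e.\ $x$ is the remark recorded just above the statement, since for $\nu$-a.e.\ $x$ one has $\nu(\,\cdot\mid\mathcal{G})(x)=\sum_n\nu(\,\cdot\mid D_n)\mathbbm{1}_{D_n}(x)$, which for a.e.\ $x$ reduces to $\nu(\,\cdot\mid D_{n})$ for the unique, positive-$\nu$-mass block containing $x$, and this is absolutely continuous w.r.t.\ $\nu$.

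The substance is $(ii)\Rightarrow(i)$. By Theorem~\ref{result:characterization:c.g.under<->total+reverse} we already have $R(\cdot)=\nu(\,\cdot\mid\sigma(R))$ with $\sigma(R)$ c.g., so by Lemma~\ref{lemma:connections:atoms_R} every atom $[x]_{\sigma(R)}=\{y:R_y=R_x\}$ lies in $\sigma(R)$; it remains only to extract a countable partition. Outside a $\nu$-null set both \textsf{(T)}, i.e.\ $R_x([x]_{\sigma(R)})=1$, and $R_x\ll\nu$ hold, and on that good set $\nu([x]_{\sigma(R)})$ cannot vanish --- otherwise $R_x\ll\nu$ would give $R_x([x]_{\sigma(R)})=0$. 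Hence $\nu$-a.e.\ $x$ sits in a $\sigma(R)$-atom of strictly positive $\nu$-measure; since the atoms are pairwise disjoint, for each integer $k\ge1$ at most finitely many of them exceed $\nu$-measure $1/k$, so the atoms met by the good set form a countable family $\{D_1,D_2,\dots\}$. Put $C:=\bigcup_nD_n$; then $C\in\sigma(R)$, $\nu(C)=1$ (it contains the good set), and the $D_n$ partition $C$. Finally, since each $D_n$ is an atom, any $H\in\sigma(R)$ with $H\subseteq C$ is the union of those $D_n$ it contains, whence $\sigma(R)\cap C=\sigma(D_1,D_2,\dots)$; so $\sigma(R)$ is g.c.p.\ under $\nu$ and $\mathcal{G}:=\sigma(R)$ works. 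For $\nu$-essential uniqueness I would observe that a g.c.p.-under-$\nu$ $\sigma$-algebra is c.g.\ under $\nu$ and that $R=\nu(\,\cdot\mid\mathcal{G})$ is proper by Proposition~\ref{result:introduction:proper:characterization}, so this is a direct instance of the uniqueness already established in Theorem~\ref{result:characterization:c.g.under<->total+reverse} --- two applications of Lemma~\ref{lemma:characterization:uniqueness} force any two such conditioning $\sigma$-algebras to agree up to a $\nu$-null set.

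The main obstacle --- really the only step that is not bookkeeping on top of the previous theorem --- is the passage from the abstract condition $R_x\ll\nu$ to the dichotomy $\nu([x]_{\sigma(R)})>0$ a.e., and thence to countability of the atoms. A minor technical point to keep an eye on is that $\sigma(R)$ is genuinely countably generated (not merely c.g.\ under $\nu$), so that all atoms, and the set $C$ assembled from them, are honest members of $\sigma(R)$.
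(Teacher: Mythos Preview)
Your argument is correct and is in fact more self-contained than the paper's. Both proofs handle $(i)\Rightarrow(ii)$ and the uniqueness clause the same way; the divergence is in $(ii)\Rightarrow(i)$. After invoking Theorem~\ref{result:characterization:c.g.under<->total+reverse} to get $R=\nu(\,\cdot\mid\sigma(R))$, the paper notes that $R$ then satisfies \textsf{(R)} and \textsf{(SR)} and appeals to an external result (the proof of Theorem~3.10 in \cite{sariev2024}) to produce a countable measurable partition $\{D_1,D_2,\dots\}$ of $\mathbb{X}$ with $R_x=\nu(\,\cdot\mid\sigma(D_1,D_2,\dots))(x)$ a.e.; it must then do some extra bookkeeping --- building $\mathcal{G}':=\{(D\cap C)\cup(E\cap C^c):D,E\in\mathcal{G}\}$ --- because the almost-sure set $C$ obtained there is only known to lie in $\mathcal{X}$. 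Your route bypasses the external reference entirely by the clean observation that $R_x\ll\nu$ together with $R_x([x]_{\sigma(R)})=1$ forces $\nu([x]_{\sigma(R)})>0$, whence only countably many atoms can occur with positive mass and their union $C$ already sits in $\sigma(R)$; this lets you take $\mathcal{G}=\sigma(R)$ directly without the $\mathcal{G}'$ construction. The paper's approach has the mild advantage that it links the result to a structural theorem used elsewhere, but yours is shorter, uses only tools internal to the present paper, and makes the role of the domination hypothesis more transparent.
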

\begin{proof}
We have already shown the necessity of \textsf{(S)}, \textsf{(T)} w.r.t. $\sigma(R)$, and $R_x\ll\nu$ for $\nu$-a.e. $x$. Conversely, suppose that $R$ satisfies \textsf{(S)}, \textnormal{\textsf{(T)}} w.r.t. $\sigma(R)$, and $R_x\ll\nu$ for $\nu$-a.e. $x$. Theorem \ref{result:characterization:c.g.under<->total+reverse} and Proposition \ref{result:introduction:proper:characterization} imply that $(i)$ $R=\nu(\,\cdot\mid\mathcal{G})$ for some sub-$\sigma$-algebra $\mathcal{G}$ of $\mathcal{X}$, and $(ii)$ $R$ is a.e. proper w.r.t. $\mathcal{G}$, say on $F\in\mathcal{G}$ with $\nu(F)=1$. Moreover, from the properties of r.c.d.s, $R$ satisfies \textsf{(R)} and \textsf{(SR)}. It now follows from the proof of Theorem 3.10 in \cite{sariev2024} that there exists a partition $\{D_1,D_2,\ldots\}$ of $\mathbb{X}$ in $\mathcal{X}$ such that, letting $\mathcal{H}=\sigma(D_1,D_2\ldots)$, it holds, as measures on $\mathbb{X}$,
\begin{equation}\label{example:representation:integral_equations:dominated:eq1}
R_x(dy)=\nu(dy|\mathcal{H})(x)\qquad\mbox{for }\nu\mbox{-a.e. }x.
\end{equation}
Take $C\in\mathcal{X}$, with $\nu(C)=1$, to be simultaneously the almost sure set in \eqref{example:representation:integral_equations:dominated:eq1} and such that $\nu(H|\mathcal{H})(x)=\delta_x(H)$, for all $H\in\mathcal{H}$ and $x\in C$. It is then straightforward to check that
\[\mathcal{G}\cap C\cap F=\mathcal{H}\cap C\cap F.\]
Finally, define $\mathcal{G}':=\{(D\cap C)\cup(E\cap C^c):D,E\in\mathcal{G}\}$. Then $\mathcal{G}'$ is a $\sigma$-algebra such that $\mathcal{G}\subseteq\mathcal{G}'$, $C\in\mathcal{G}'$, and $\mathcal{G}'\cap C\cap F=\mathcal{H}\cap C\cap F$; thus, $\mathcal{G}'$ is g.c.p. under $\nu$. Moreover, it is not hard to see that $R(\cdot)=\nu(\,\cdot\mid\mathcal{G}')$. The uniqueness part follows as in the proof of Theorem \ref{result:characterization:c.g.under<->total+reverse}.
\end{proof}

\section{More on r.c.d.s}\label{section:proper}

Intuition tells us that when we condition on a $\sigma$-algebra, we actually expect to be conditioning on the particular atom that occurs. Indeed, when the sub-$\sigma$-algebra $\mathcal{G}\subseteq\mathcal{X}$ is g.c.p., we have seen that the probability measure $\nu(\,\cdot\mid\mathcal{G})(x)$ coincides with $\nu(\,\cdot\mid[x]_\mathcal{G})$, for $\nu$-a.e. $x$. Then, in particular, $\nu(A|\mathcal{G})(x)=1$ whenever $x\in A\in\mathcal{G}$, which \cite{blackwell1975} calls an ``intuitive desideratum'' for r.c.d.s. However, for general $\mathcal{G}$, the $\mathcal{G}$-atoms can be uncountably many, so most if not all of them will be $\nu$-null sets (provided they are even measurable). In that case, $\nu(\,\cdot\mid[x]_\mathcal{G})$ is left undefined and it may even be that $\nu(\,\cdot\mid\mathcal{G})(x)$ and $\nu$ are mutually singular. For example, if $\mathcal{G}$ is c.g. under $\nu$, then Proposition \ref{result:introduction:proper:characterization} implies for $\nu$-a.e. $x$ that $\nu([x]_\mathcal{G}|\mathcal{G})(x)=1$, even if $\nu([x]_\mathcal{G})=0$. Moreover, as Proposition \ref{result:introduction:proper:characterization} suggests, properness of $\nu(\,\cdot\mid\mathcal{G})$ depends on the particular $\mathcal{G}$ which is used. Another problem that arises, and which is highlighted by the following remark, is that when we condition on a set, which is simultaneously an atom of two different sub-$\sigma$-algebras, the resulting probability measure may depend on the particular $\sigma$-algebra used, see also \citep[][Section 2.1]{seidenfeld2001}. Therefore, conditioning on $\sigma$-algebras is, in general, conceptually different from conditioning on events.

\begin{remark}[Borel-Kolmogorov paradox]\label{remark:proper:borel-kolmogorov}
Let $X$ and $Y$ be random variables on a probability space $(\Omega,\mathcal{F},\mathbb{P})$. Then $[x]_{\sigma(X)}=\{X=x\}$ and $[y]_{\sigma(Y)}=\{Y=y\}$, for all $x,y\in\mathbb{R}$. Suppose that $A=\{X=x^*\}=\{Y=y^*\}$ for some $x^*,y^*\in\mathbb{R}$. Take $\omega\in A$. If $X$ and $Y$ are different, then it may happen that $\mathbb{P}(\,\cdot\mid X)(\omega)\neq\mathbb{P}(\,\cdot\mid Y)(\omega)$, see, e.g., \citep[][Problem 33.1]{billingsley1995}.
\end{remark}

Proposition \ref{result:proper:r.c.d.s} collects several facts that are consistent with our intuition about r.c.d.s. First, it reinforces the idea that, conditionally on $\mathcal{G}$, we observe the atom $[x]_\mathcal{G}$, not the point $x$. Thus, we expect two different points $x,y$ to provide the same information when $[x]_\mathcal{G}=[y]_\mathcal{G}$, see \cite[p.\,7]{berti2020}. Moreover, Proposition \ref{result:proper:r.c.d.s} shows that r.c.d.s agree with the usual notion of conditional probabilities when conditioning on atoms with positive $\nu$-probability. On the other hand, the Borel-Kolmogorov paradox, as stated, is a zero-probability problem, which will not be true if enough atoms are shared between the conditioning $\sigma$-algebras. However, in the case of c.g. under $\nu$ sub-$\sigma$-algebras sharing a.e. of their atoms, we show that the conditional distributions coincide at $\nu$-a.e. point, see also \cite[Theorem 5.14(4)]{einsielder2011}.

\begin{proposition}\label{result:proper:r.c.d.s}
Let $\mathcal{G}$ and $\mathcal{H}$ be sub-$\sigma$-algebras of $\mathcal{X}$. Then
\begin{enumerate}
\item $\nu(\,\cdot\mid\mathcal{G})(x)=\nu(\,\cdot\mid\mathcal{G})(y)$ when $[x]_\mathcal{G}=[y]_\mathcal{G}$, for all $x,y\in\mathbb{X}$;
\item $\nu(\,\cdot\mid\mathcal{G})(x)=\nu(\,\cdot\mid G)$ for $\nu$-a.e. $x\in G$, where $G\in\mathcal{G}$ is a $\mathcal{G}$-atom such that $\nu(G)>0$. 
\end{enumerate}
If, in addition, $\mathcal{G},\mathcal{H}$ are both c.g. under $\nu$ and $\nu$-a.e. of their atoms coincide, then
\begin{enumerate}
\item[(iii)] $\nu(\,\cdot\mid\mathcal{G})(x)=\nu(\,\cdot\mid\mathcal{H})(x)$ for $\nu$-a.e. $x$.
\end{enumerate}
\end{proposition}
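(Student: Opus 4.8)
The plan is to handle the three parts in order, since each builds on the previous one.

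For part $(i)$: if $[x]_\mathcal{G}=[y]_\mathcal{G}$, then by condition $(iii)$ in the definition of an r.c.d., the map $x\mapsto\nu(B\mid\mathcal{G})(x)$ is $\mathcal{G}$-measurable for every $B\in\mathcal{X}$, hence constant on $\mathcal{G}$-atoms. So $\nu(B\mid\mathcal{G})(x)=\nu(B\mid\mathcal{G})(y)$ for all $B\in\mathcal{X}$, which is the claim. This is immediate and requires no measure-theoretic subtlety beyond the definition.

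For part $(ii)$: let $G\in\mathcal{G}$ be a $\mathcal{G}$-atom with $\nu(G)>0$. The idea is to apply property $(iv)$ of the r.c.d. with $A=G$: for every $B\in\mathcal{X}$, $\int_G\nu(B\mid\mathcal{G})(x)\,\nu(dx)=\nu(G\cap B)=\nu(G)\,\nu(B\mid G)$. By part $(i)$, $\nu(B\mid\mathcal{G})(x)$ is constant in $x$ on $G$ (since $G$ being an atom forces $[x]_\mathcal{G}=G$ for all $x\in G$ — strictly, $[x]_\mathcal{G}\subseteq G$, and since $G$ is an atom and $x\in G$ one gets $[x]_\mathcal{G}=G$); call the common value $c_B$. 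Then $\nu(G)c_B=\nu(G)\nu(B\mid G)$, so $c_B=\nu(B\mid G)$; thus $\nu(B\mid\mathcal{G})(x)=\nu(B\mid G)$ for every $x\in G$ and every $B$. To upgrade this to ``$\nu(\,\cdot\mid\mathcal{G})(x)=\nu(\,\cdot\mid G)$ as measures for $\nu$-a.e. $x\in G$'', restrict to a countable $\pi$-system generating $\mathcal{X}$, combine the (at most countably many) exceptional null sets, and invoke the uniqueness-of-measures theorem. The mild subtlety is that the exceptional $\nu$-null set within $G$ has to be collected across a generating class; this is routine since $\mathcal{X}$ is c.g.

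For part $(iii)$, which I expect to be the main obstacle: assume $\mathcal{G},\mathcal{H}$ are both c.g. under $\nu$ and their atoms agree $\nu$-a.e., i.e. there is $N$ with $\nu(N)=0$ such that $[x]_\mathcal{G}=[x]_\mathcal{H}$ for $x\notin N$. By Proposition \ref{result:introduction:proper:characterization}, both r.c.d.s are a.e. proper and a.e. total on their respective atoms; fix $F\in\mathcal{G}\cap\mathcal{H}$ (intersect the two a.e. sets) with $\nu(F)=1$ on which both $\nu(A\mid\mathcal{G})(x)=\delta_x(A)$ for $A\in\mathcal{G}$ and $\nu(A\mid\mathcal{H})(x)=\delta_x(A)$ for $A\in\mathcal{H}$, and on which $\nu([x]_\mathcal{G}\mid\mathcal{G})(x)=\nu([x]_\mathcal{H}\mid\mathcal{H})(x)=1$ with $[x]_\mathcal{G},[x]_\mathcal{H}\in\mathcal{X}$. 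The cleanest route is to compare both with the join $\mathcal{K}:=\sigma(\mathcal{G}\cup\mathcal{H})$: on $F\setminus N$ the $\mathcal{G}$-atom equals the $\mathcal{H}$-atom, and the $\mathcal{K}$-atom is contained in their intersection, hence equals both; so it suffices to show $\nu(\,\cdot\mid\mathcal{G})(x)=\nu(\,\cdot\mid\mathcal{K})(x)$ for $\nu$-a.e. $x$ (and symmetrically for $\mathcal{H}$). By the tower property, $\nu(B\mid\mathcal{G})=\mathbb{E}_\nu[\nu(B\mid\mathcal{K})\mid\mathcal{G}]$; the point is that $x\mapsto\nu(B\mid\mathcal{K})(x)$, being $\mathcal{K}$-measurable and hence constant on $\mathcal{K}$-atoms, is constant on $\mathcal{G}$-atoms on the good set (since there $\mathcal{G}$-atoms equal $\mathcal{K}$-atoms), so conditioning on $\mathcal{G}$ leaves it unchanged $\nu$-a.e. — here one uses that a function constant on $\nu$-a.e. atom of a c.g.-under-$\nu$ $\sigma$-algebra is $\nu$-a.e. equal to a $\mathcal{G}$-measurable function (the forward direction of the remark after Lemma \ref{lemma:connections:constant}, via Lemma 5.18 of \cite{einsielder2011}). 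Then $\nu(B\mid\mathcal{G})=\nu(B\mid\mathcal{K})$ $\nu$-a.e. for each $B$; run this over a countable generating $\pi$-system for $\mathcal{X}$, combine null sets, and extend to all of $\mathcal{X}$ by the uniqueness-of-measures theorem, yielding $\nu(\,\cdot\mid\mathcal{G})(x)=\nu(\,\cdot\mid\mathcal{K})(x)=\nu(\,\cdot\mid\mathcal{H})(x)$ for $\nu$-a.e. $x$. The delicate bookkeeping is ensuring that ``$\mathcal{G}$-atoms $=\mathcal{K}$-atoms on a $\nu$-full set'' is genuinely available: it requires that $\mathcal{G}$ contains $\nu$-a.e. of its atoms, $\mathcal{H}$ likewise, and that measurability of the atoms survives on the common full set — all guaranteed by Proposition \ref{result:introduction:proper:characterization} since both $\sigma$-algebras are c.g. under $\nu$.
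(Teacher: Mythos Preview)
Your arguments for (i) and (ii) are correct and essentially match the paper's: for (i) the paper quotes Lemma~\ref{lemma:connections:constant}, which is exactly your observation that $\mathcal{G}$-measurable maps are constant on $\mathcal{G}$-atoms; for (ii) the paper verifies the defining integral identity over all $A\in\mathcal{G}$ rather than invoking (i), but the two arguments are interchangeable. Incidentally, your version of (ii) already yields the equality for \emph{every} $x\in G$ (the common value $c_B$ is pinned down exactly), so the null-set collection you append is unnecessary.

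For (iii) you take a genuinely different route. The paper restricts to a $\nu$-full set $C$ on which $\mathcal{G}\cap C$ and $\mathcal{H}\cap C$ are c.g.\ with the same atoms, and then invokes Blackwell's result \cite[Corollary~1]{blackwell1956} that two c.g.\ $\sigma$-algebras with identical atoms must coincide; once $\mathcal{G}\cap C=\mathcal{H}\cap C$, the functions $\nu(B\mid\mathcal{G})\mathbbm{1}_C$ and $\nu(B\mid\mathcal{H})\mathbbm{1}_C$ are measurable w.r.t.\ this common trace and satisfy the same defining integral identity, hence agree $\nu$-a.e. Your argument instead passes to the join $\mathcal{K}=\sigma(\mathcal{G}\cup\mathcal{H})$, uses that $[x]_\mathcal{K}=[x]_\mathcal{G}\cap[x]_\mathcal{H}=[x]_\mathcal{G}$ on the full set, and combines the tower property with \cite[Lemma~5.18]{einsielder2011} to conclude $\nu(\,\cdot\mid\mathcal{G})=\nu(\,\cdot\mid\mathcal{K})$ $\nu$-a.e. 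Both approaches are valid. The paper's route is shorter once Blackwell's structural lemma is granted and yields the equality of trace $\sigma$-algebras as a byproduct; your route avoids that external reference but trades it for the Einsiedler--Ward measurability lemma plus a tower computation, and so stays closer to the toolbox already developed in the paper.
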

\begin{proof}
The first result follows from Lemma \ref{lemma:connections:constant}.
Regarding $(ii)$, let $B\in\mathcal{X}$. Take $A\in\mathcal{G}$ such that $A\cap G\neq\emptyset$. Since $G$ is a $\mathcal{G}$-atom, we have $A\cap G=G$, and so
\begingroup\allowdisplaybreaks
\begin{align*}
\int_A\nu(B|\mathcal{G})(x)\delta_x(G)\nu(dx)&=\nu(B\cap G)=\int_G\nu(B|G)\nu(dx)=\int_A\nu(B|G)\delta_x(G)\nu(dx).
\end{align*}
\endgroup
The same is trivially true if we assumed $A\cap G=\emptyset$ instead. Since $x\mapsto\nu(B|\mathcal{G})(x)\delta_x(G)$ is $\mathcal{G}$-measurable and $\mathcal{X}$ is c.g., combining the essential sets, we obtain $(ii)$.

Regarding $(iii)$, by hypothesis, there exists $C\in\mathcal{X}$ such that $\nu(C)=1$, $[x]_\mathcal{G}=[x]_\mathcal{H}$ for all $x\in C$, and $\mathcal{G}\cap C$ and $\mathcal{H}\cap C$ are c.g. It follows that $[x]_{\mathcal{G}\cap C}=[x]_\mathcal{G}\cap C=[x]_\mathcal{H}\cap C=[x]_{\mathcal{H}\cap C}$. Therefore, $\mathcal{G}\cap C$ and $\mathcal{H}\cap C$ have the same atoms, so \cite[Corollary 1]{blackwell1956} implies that $\mathcal{G}\cap C=\mathcal{H}\cap C$. Let $A\in\mathcal{G}\cap\mathcal{H}$ and $B\in\mathcal{X}$. Then
\[\int_A\nu(B|\mathcal{G})(x)\delta_x(C)\nu(dx)=\nu(A\cap B)=\int_A\nu(B|\mathcal{H})(x)\delta_x(C)\nu(dx).\]
Since $\nu(B|\mathcal{G})\delta(C)$ and $\nu(B|\mathcal{H})\delta(C)$ are $\mathcal{G}\cap\mathcal{H}\cap C$-measurable, and $\mathcal{X}$ is c.g., we obtain, as measures on $\mathbb{X}$, that $\nu(dy|\mathcal{G})(x)\delta_x(C)=\nu(dy|\mathcal{H})(x)\delta_x(C)$ for $\nu$-a.e. $x$.
\end{proof}

\subsection*{Acknowledgments}

This study is financed by the European Union-NextGenerationEU, through the National Recovery and Resilience Plan of the Republic of Bulgaria, project No. BG-RRP-2.004-0008.

\bibliography{mybib} 

\begin{thebibliography}{16}
\providecommand{\natexlab}[1]{#1}
\providecommand{\url}[1]{\texttt{#1}}
\expandafter\ifx\csname urlstyle\endcsname\relax
  \providecommand{\doi}[1]{doi: #1}\else
  \providecommand{\doi}{doi: \begingroup \urlstyle{rm}\Url}\fi

\bibitem[Berti and Rigo(1999)]{berti1999}
P.~Berti and P.~Rigo.
\newblock Sufficient conditions for the existence of disintegrations.
\newblock \emph{J. Theoret. Probab.}, 12\penalty0 (1):\penalty0 75--86, 1999.

\bibitem[Berti and Rigo(2007)]{berti2007}
P.~Berti and P.~Rigo.
\newblock 0--1 laws for regular conditional distributions.
\newblock \emph{Ann. Probab.}, 35\penalty0 (2):\penalty0 649--662, 2007.

\bibitem[Berti et~al.(2013)Berti, Dreassi, and Rigo]{berti2013}
P.~Berti, E.~Dreassi, and P.~Rigo.
\newblock A consistency theorem for regular conditional distributions.
\newblock \emph{Stochastics}, 85\penalty0 (3):\penalty0 500--509, 2013.

\bibitem[Berti et~al.(2014)Berti, Pratelli, and Rigo]{berti2014}
P.~Berti, L.~Pratelli, and P.~Rigo.
\newblock A unifying view on some problems in probability and statistics.
\newblock \emph{Stat. Methods Appl.}, 23\penalty0 (4):\penalty0 483--500, 2014.

\bibitem[Berti et~al.(2020)Berti, Dreassi, and Rigo]{berti2020}
P.~Berti, E.~Dreassi, and P.~Rigo.
\newblock A notion of conditional probability and some of its consequences.
\newblock \emph{Decis. Econ. Finance}, 43\penalty0 (1):\penalty0 3--15, 2020.

\bibitem[Berti et~al.(2021)Berti, Dreassi, Pratelli, and Rigo]{berti2021}
P.~Berti, E.~Dreassi, L.~Pratelli, and P.~Rigo.
\newblock {A class of models for Bayesian predictive inference}.
\newblock \emph{Bernoulli}, 27\penalty0 (1):\penalty0 702--726, 2021.

\bibitem[Billingsley(1995)]{billingsley1995}
P.~Billingsley.
\newblock \emph{Probability and Measure}.
\newblock John Wiley \& Sons, Inc., New York, 3 edition, 1995.

\bibitem[Blackwell(1956)]{blackwell1956}
D.~Blackwell.
\newblock On a class of probability spaces.
\newblock \emph{Proc. Third Berkeley Symp. Math. Statist. Probab. 1954–1955},
  pages 1--6, 1956.

\bibitem[Blackwell and Dubins(1975)]{blackwell1975}
D.~Blackwell and L.~E. Dubins.
\newblock On existence and non-existence of proper, regular, conditional
  distributions.
\newblock \emph{Ann. Probab.}, 3\penalty0 (5):\penalty0 741--752, 1975.

\bibitem[Einsiedler and Ward(2011)]{einsielder2011}
M.~Einsiedler and T.~Ward.
\newblock \emph{Ergodic Theory with a view towards Number Theory}, volume 259
  of \emph{Grad. Texts in Math.}
\newblock Springer-Verlag, London, 2011.
\newblock xviii+481 pp.

\bibitem[Lo{\`e}ve(1978)]{loeve1978}
M.~Lo{\`e}ve.
\newblock \emph{Probability Theory II}.
\newblock Springer-Verlag, New York, 4 edition, 1978.

\bibitem[Preston(1976)]{preston1976}
C.~Preston.
\newblock \emph{Random Fields}, volume 534 of \emph{Lecture Notes in Math.}
\newblock Springer-Verlag, Berlin-New York, 1976.
\newblock ii+200 pp.

\bibitem[Sariev and Savov(2024)]{sariev2024}
H.~Sariev and M.~Savov.
\newblock {Characterization of exchangeable measure-valued P\'{o}lya urn
  sequences}.
\newblock \emph{Electron. J. Probab.}, 29, 2024.
\newblock paper no. 73, 23 pp.

\bibitem[Sariev and Savov(2025)]{sariev2025}
H.~Sariev and M.~Savov.
\newblock {Sufficientness postulates for measure-valued P\'{o}lya urn
  sequences}.
\newblock \emph{J. Appl. Probab.}, 2025.
\newblock To appear.

\bibitem[Seidenfeld et~al.(2001)Seidenfeld, Schervish, and
  Kadane]{seidenfeld2001}
T.~Seidenfeld, M.~Schervish, and J.~Kadane.
\newblock Improper regular conditional distributions.
\newblock \emph{Ann. Probab.}, 29\penalty0 (4):\penalty0 1612--1624, 2001.

\bibitem[Sokal(1981)]{sokal1981}
A.~Sokal.
\newblock Existence of compatible families of proper regular conditional
  probabilities.
\newblock \emph{Z. Wahrsch. Verw. Gebiete}, 56\penalty0 (4):\penalty0 537--548,
  1981.

\end{thebibliography}

\end{document}